\documentclass[11pt]{article}
\usepackage{latexsym}
\topmargin 0.25in
\textheight 8in
\textwidth 6.25in
\evensidemargin 0in
\oddsidemargin 0in

\usepackage{graphicx} 
\usepackage[all]{xypic}
\usepackage{color} 
\usepackage{amsmath, amsthm, amssymb}
\usepackage{enumerate}
\usepackage{float}
\usepackage{chemarr}
\usepackage{url}
\usepackage{stackrel}
\usepackage[small,bf]{caption}
\usepackage{wrapfig}

\newtheorem{theorem}{Theorem}[section]

\newtheorem{lemma}[theorem]{Lemma}
\newtheorem{example}[theorem]{Example}

\newtheorem{definition}[theorem]{Definition}

\newtheorem{remark}[theorem]{Remark}


\newcommand{\be}{\begin{equation}}
\newcommand{\ee}{\end{equation}}
\newcommand{\bd}{\begin{displaymath}}
\newcommand{\ed}{\end{displaymath}}
\newcommand{\beal}{\begin{align}}
\newcommand{\enal}{\end{align}}
\newcommand{\been}{\begin{enumerate}}
\newcommand{\enen}{\end{enumerate}}
\newcommand{\beit}{\begin{itemize}}
\newcommand{\enit}{\end{itemize}}

\newcommand{\CRN}{chemical reaction network }
\newcommand{\CRNNoSpace}{chemical reaction network}

\newcommand{\doalg}{Deficiency One Algorithm }
\newcommand{\doalgnospace}{Deficiency One Algorithm}

\newcommand{\Net}{\mathfrak{G}}
\newcommand{\invtPoly}{\mathcal{P}} 

\newcommand{\SSS}{\mathcal S}
\newcommand{\CC}{\mathcal C}
\newcommand{\RR}{\mathcal R}

\providecommand{\abs}[1]{\lvert#1\rvert}

\def\ds{\displaystyle}
\def\de{\delta}


\newcommand{\ra}{\rightarrow}
\newcommand{\lra}{\leftrightarrows}
\newcommand{\rla}{\rightleftarrows}


\newcommand{\R}{\mathbb{R}}
\newcommand{\Rplus}{\mathbb{R}_{>0}}

\newcommand{\Z}{\mathbb{Z}}



\newcommand{\im}{\operatorname{Im}}

\begin{document}

\title{Complete characterization by multistationarity of fully open networks with one non-flow reaction}


\author{Badal Joshi}


\date{}

\maketitle

\begin{abstract}
This article characterizes certain small multistationary chemical reaction networks. We consider the set of fully open networks, those for which all chemical species participate in inflow and outflow, containing one non-flow (reversible or irreversible) reaction. We show that such a network admits multiple positive mass-action steady states if and only if the stoichiometric coefficients in the non-flow reaction satisfy a certain simple arithmetic relation. The multistationary fully open one-reaction networks are identified with the chemical process of autocatalysis. Using the notion of `embedded network' defined recently by Joshi and Shiu, we provide new sufficient conditions for establishing multistationarity of fully open networks, applicable well beyond the one-reaction setting.
 \vskip 0.02in
{\bf Keywords:} chemical reaction networks, CFSTR, fully open network, mass-action kinetics, multiple steady states, deficiency one theorem, deficiency one algorithm, atoms of multistationarity. 
\end{abstract}

\section{Introduction}
Chemical reaction networks are used to model systems that occur in chemical engineering and systems biology. The property of existence of multiple positive steady states (also known as multistationarity) provides the mathematical underpinnings for a biochemical network to act as a switch \cite{markevich2004signaling, thomson2009unlimited}. Therefore it is an important problem to determine which chemical reaction networks admit multiple positive steady states. Determining whether a chemical reaction network admits multiple positive steady states is difficult: for instance, in the mass-action kinetics setting, it requires determining existence of multiple positive solutions to a system of multivariate polynomials with unknown coefficients. Several criteria exist which help rule out multistationarity in chemical reaction networks. Important examples of such criteria are Deficiency Zero and Deficiency One Theorems of Horn, Jackson, and Feinberg \cite{feinberg1972complex, FeinDefZeroOne, FeinbergMSSdefone, horn1972necessary, HornJackson72}, the Jacobian criterion and the more general injectivity test of Craciun and Feinberg \cite{ME1,ME_entrapped,ME2,ME3,JiThesis};  the graphical criteria of Soul\'e \cite{soule2004graphic} and in more general settings than mass-action, the work of Banaji {\it et al.} \cite{BanajiCraciun2010, BanajiDonBai}; also see the more recent extensions by Feliu and Wiuf \cite{feliu2012preclusion}, Joshi and Shiu \cite{joshi2012simplifying}, and Gnacadja \cite{gnacadja2012jacobian}. The results in these papers provide sufficient conditions for ruling out multistationarity, which alternatively may be viewed as providing necessary conditions for establishing multistationarity since avoiding the conditions is necessary for multiple steady states. On the other hand, sufficient conditions for establishing multistationarity are relatively rare. Instances where multistationarity can be established include Feinberg's Deficiency One Algorithm \cite{FeinbergMSSdefone} and Ellison and Feinberg's Advanced Deficiency Algorithm \cite{EllisonThesis}. These criteria have been implemented in the Chemical Reaction Network Toolbox, software available online for free download and use \cite{Toolbox}. Other results on establishing multistationarity include the work of Conradi {\it et al.} \cite{conradi2012switching, conradi2007subnetwork}. 

Within the fully open network setting (a fully open network is a chemical reaction network where all chemical species participate in inflow and outflow), recent results by Joshi and Shiu \cite{joshi2012atoms} give a new approach for establishing multistationarity via `atoms of multistationarity' (see Definition \ref{def:atom}). Possessing an atom of multistationarity as an `embedded network' (see Definition \ref{def:embedded_network}) is a sufficient condition for multistationarity in fully open networks. Using this approach, the problem of classifying fully open networks by multistationarity may be reduced to two relatively simpler problems: 1) determining the atoms of multistationarity,  and 2) determining whether a network possesses one of the known atoms of multistationarity as an embedded network. Here we focus on the first problem, and provide an answer for the smallest networks. The next examples illustrate the type of questions that the results in this article will enable us to answer.

\begin{example} \label{ex:3nets}
Consider the following fully open networks N1-N3 in species $A,B,C,D$ and $E$. By the network property of being `fully open' we mean that the flow reactions $0 \rla A$, $0 \rla B$, $0 \rla C$, $0 \rla D$, and $0 \rla E$ are included in all three networks. 
\been[N1:]
\item $\qquad A+B \ra A+C \qquad 2B \ra A+D \qquad A+2E \ra 3E$. 
\item $\qquad A+B \ra A+C \qquad 2B \ra A+D \qquad A+E \ra 2E$. 
\item $\qquad A+C \ra A+B \qquad 2B \ra A+D \qquad A+E \ra 2E$. 
\enen  
{\em Does the network N1 (or N2, or N3), when endowed with mass-action kinetics (see Definition \ref{def:mass-action}), admit a choice of positive rate constants for which N1 (or N2, or N3) has multiple positive steady states?} Note that N2 differs from N1 only in the third reaction and only in the stoichiometric coefficients of the species $E$. Moreover, N2 differs from N3 only in the direction of the first reaction. We will demonstrate the delicate dependence on the network structure by showing that only N1 and N3 admit multiple positive steady states -- both by virtue of possessing known atoms of multistationarity, while N2 does not admit multiple steady states.
\end{example}

\begin{example} \label{ex:schlosser}
Consider the following fully open networks M1-M3 appearing in the work of Schlosser and Feinberg \cite{schlosser1994theory}. All networks have the `fully open' property of having all chemical species participate in the inflow and the outflow.
\been[M1:]
\item $A+B \lra 2A$. 
\item $2A+B \lra 3A$.
\item $A+2B \lra 3A$.
\enen
The main theorem (Theorem \ref{thm:1rxn}) in this article will show that in the mass-action setting, only network M2 has the capacity for multiple positive steady states, while the networks M1 and M3 cannot admit multiple positive steady states no matter what positive reaction rate constants are chosen. 
\end{example}


In this work, we characterize the class of `smallest' atoms of multistationarity, namely those containing one non-flow reaction, which may be irreversible or reversible. This is a continuation of the work in \cite{joshi2012atoms}, where the authors catalog all two-reaction bimolecular atoms of multistationarity.  Atoms of multistationarity containing one non-flow reaction will be referred to as {\em one-reaction atoms of multistationarity}. Consider the following general one-reaction fully open network consisting of $s$ species all of which are in the inflow and outflow:
\begin{align*}
0   \stackrel[l_i]{k_i}{\rightleftarrows} X_i ~,& ~~ 1 \le i \le s\\
a_1 X_1 + a_2 X_2 + \cdots + a_s X_s \quad &\stackrel[k_b]{k_a}{\rightleftarrows}  \quad b_1 X_1 + b_2 X_2 + \cdots + b_s X_s
\end{align*}
where at least one of the rate constants $k_a$ or $k_b$ is assumed to be positive. The $k_i$ and $l_i$ are positive rate constants which denote the rate at which the species $X_i$ flows in and out, respectively. The stoichiometric coefficients $a_i$ and $b_i$ are assumed to be non-negative integers. The main theorem (Theorem \ref{thm:1rxn}) in this article gives a simple arithmetic relation on the stoichiometric coefficients which establishes whether the network is multistationary or not. 

Two important results follow from Theorem \ref{thm:1rxn}. The first result is Theorem \ref{thm:1rxnatoms}, which gives a classification of the entire set of one-reaction atoms of multistationarity. We find that the infinitely many one-reaction atoms of multistationarity can be classified into two types, each type parametrized by two integers. The first type contains one chemical species and the second type contains two chemical species. Furthermore, the non-flow reaction in both types of atoms is irreversible. As corollaries of Theorem \ref{thm:1rxnatoms}, we find that: 1) there are no one-reaction atoms of multistationarity with a reversible non-flow reaction (in other words, if a one-reaction network with a reversible non-flow reaction is multistationary, then it contains a multistationary subnetwork which is fully open and has an irreversible non-flow reaction) and 2) a bimolecular reaction network containing one non-flow reaction (which may be reversible or irreversible) does not admit multiple steady states. The second result that follows from Theorem \ref{thm:1rxn} is Theorem \ref{thm:largenets}, which is obtained by combining Theorem \ref{thm:1rxn} with the `embedded network theorem' of Joshi and Shiu  \cite{joshi2012atoms} and extends the applicability of Theorem \ref{thm:1rxn} beyond the setting of one-reaction networks. Theorem \ref{thm:largenets} states that a fully open network with any number of non-flow reactions admits multiple steady states if it possesses a one-reaction atom of multistationarity as an embedded network. 

We find that the multistationary one-reaction fully open networks including the one-reaction atoms of multistationarity are identified with the chemical process of autocatalysis. More precisely, a one-reaction fully open network is multistationary if and only if the network contains a non-flow reaction with a set of species that are autocatalytic ({\it i.e.} they appear with a higher stoichiometric coefficient in the product complex than in the reactant complex), and the sum of the stoichiometric coefficients of such autocatalytic species in the reactant complex is at least two. 

Other authors have previously approached the problem of identifying the smallest chemical reaction networks with a certain specified property. Smallest multistationary chemical reaction networks with the mass-preserving property have been studied in \cite{Smallest}. The smallest chemical reaction outside the fully open network setting (smallest by number of species, number of reactions, and number of terms in the differential equation)  was studied in \cite{ThomasSmallest} and the smallest chemical reaction network with Hopf bifurcation was studied in \cite{smallestHopf,smallestHopf2}. Other examples of classification by multistationarity of small networks include \cite{feliu2012enzyme,SmallGRN,Emergence}. Recently, generalized catalytic and autocatalytic networks have been studied in \cite{manojcatalysis}. 

This article is organized as follows. Section 2 provides the background information on chemical reaction networks including the basic definitions, notation and the Deficiency One Theorem of Feinberg. Section 3 provides a review of the Deficiency One Algorithm of Feinberg. In Section 4, we state and prove our main theorem which gives a characterization of one-reaction fully open networks by multistationarity. The first corollary gives a complete classification of one-reaction atoms of multistationarity. As a second corollary we get sufficient conditions for establishing multistationarity of larger networks by way of one-reaction atoms of multistationarity. 


\section{Chemical reaction network theory} \label{sec:intro_CRN}
We begin with a review of the notation and basic definitions related to chemical reaction networks. An example of a {\em chemical reaction} is the following:
\begin{align} \label{eq:exreaction}
  X_{1}+ 2X_{2} ~\rightarrow~ X_2 + X_{3} ~.
\end{align}
The $X_{i}$ are called chemical {\em species}, and $X_{1}+2X_{2}$ and
$X_2 + X_{3}$ are called chemical {\em complexes.}  For the reaction in \eqref{eq:exreaction}, $y := X_{1}+2X_{2}$ is called the {\em reactant complex} and $y' := X_2 + X_{3}$ is called the product complex, so we may rewrite the reaction as $y \ra y'$. We will often find it convenient to think of the complexes as vectors, for instance, we may assign the reactant complex $X_{1}+2X_{2}$ to the vector $(1,2,0)$ and the product complex  $X_2 + X_{3}$ to the vector $(0,1,1)$. In other words, we are identifying the species $X_i$ with the canonical basis vector whose $i$-th component is $1$ and the other components are $0$. We let $s$ denote the total number of species $X_i$ and we consider a set of $r$ reactions, each denoted by  $y_{k} \rightarrow y_{k}'$, for $k \in \{1,2,\dots,r\}$, and $y_k, y_k' \in \Z^s_{\ge 0}$, with $y_k \ne y_k'$. We index the entries of a 
complex vector $y_k$ by writing $y_k = \left( y_{k1}, y_{k2}, \dots, y_{ks}\right) \in \Z^s_{\geq 0}$,
and we will call $y_{ki}$ the {\em stoichiometric coefficient} of species $i$ in 
complex $y_k$.  
For ease of notation, when there is no need for
enumeration we typically will drop the subscript $k$ from the notation
for the complexes and reactions.

Many of the definitions in this paper follow those in Joshi and Shiu \cite{joshi2012atoms}; we start by defining chemical reaction networks.

\begin{definition}   \label{def:crn}
  Let $\SSS = \{X_i\}$, $\CC = \{ y\},$ and $\RR = \{ y \to  y' | y' \ne y\}$ denote finite sets of species, complexes, and reactions, respectively.  The triple
  $\{\SSS, \CC, \RR \}$ is called a {\em chemical reaction network} if it satisfies the following:
  \been
  	\item for each complex $ y \in \CC$, there exists a reaction in $\RR$ for which $ y$ is the reactant complex or $ y$ is the product complex, and
	\item for each species $X_i \in \SSS$, there exists a complex $ y \in \CC$ that contains $X_i$.
  \enen
\end{definition}

For a chemical reaction network $\{\SSS, \CC, \RR \}$, unless otherwise specified, we will denote the number of species by $s := \abs{\SSS}$, the number of complexes by $n :=\abs{\CC}$ and the number of reactions by $r:= \abs{\RR}$. 

A subset of the reactions $\RR' \subset \RR$ defines the {\em subnetwork} $\{\SSS|_{\CC|_{\RR'}},\CC|_{\RR'},\RR' \}$, where $\CC|_{\RR'}$ denotes the set of complexes that appear in the reactions $\RR'$, and $\SSS|_{\CC|_{\RR'}}$ denotes the set of species that appear in those complexes. We now define the notion of an embedded network, a more general notion than a subnetwork. 

\begin{definition} \label{def:embedded_network}
Let $\Net=\{\SSS,\CC,\RR\}$ be a \CRNNoSpace.
\begin{enumerate}
	\item Consider a subset of the species $S\subset \SSS$, a subset of the complexes $C \subset \CC$, and a subset of the reactions $R\subset \RR$. 
	\beit
		\item 
		The {\em restriction of $R$ to $S$}, denoted by $R|_S$, is the set of reactions obtained by taking the reactions in $R$ and removing all species not in $S$ from the reactant and product complexes. If a reactant or a product complex does not contain any species from the set $S$, then the complex is replaced by the $0$ complex in $R|_S$. If a trivial reaction (one in which the reactant and product complexes are the same) is obtained in this process, then that reaction is removed.  Also removed are extra copies of repeated reactions.  
		\item The {\em restriction of $C$ to $R$}, denoted by $C|_R$, is the set of (reactant and product) complexes of the reactions in $R$.  
		\item The {\em restriction of $S$ to $C$}, denoted by $S|_C$, is the set of species that are in the complexes in $C$. 
	\enit
	
	\item The network obtained from $\Net$ by {\em removing a set of reactions} $\{y \ra y'\} \subset \RR$ is the subnetwork 
		$$\left\{\SSS|_{\CC|_{\RR \setminus \{y \ra y' \}}},~\CC|_{\RR \setminus \{y \ra y' \}},~\RR \setminus \{y \ra y' \} \right\}~.$$

	\item The network obtained from $\Net$ by {\em removing a subset of species} $\{X_i\} \subset \SSS$ is the network 
		$$\left\{\SSS |_{\CC|_{\RR|_{\SSS\setminus \{X_i\}}}} ,~\CC|_{\RR|_{\SSS\setminus \{X_i\}}},~
			\RR|_{\SSS\setminus \{X_i\}}  \right\}~.$$
	\item
	Let $\Net=\{\SSS,\CC,\RR\}$ be a \CRNNoSpace.
	An {\em embedded network} of $\Net$, which is defined by a subset of the reactions, $R \subset \RR$, and a subset of the species, $S \subset \SSS$, where $S$ has the property that $S = S|_{\CC|_{R|_S}}$
	is the network $\{S,\CC|_{R|_S},R|_S\}$ consisting of the reactions $R|_S$. 
\end{enumerate}
\end{definition}

\begin{example}
We demonstrate the operations of removing reactions, and of removing species by considering the example of the following chemical reaction network:
\begin{align*}
A+C &\lra B+C \\
A+D &\lra 2E
\end{align*}
For this network $\SSS = \{A, B, C, D, E\}$, $\CC = \{A+C, B+C,A+D,2E\}$, $\RR=\{A+C \lra B+C, A+D \lra 2E\}$. 
\been
\item Consider the operation of removing the second reversible reaction; let $R = \{A+C \lra B+C\} \subset \RR$. So that $\CC |_R = \{ A+C, B+C\}$ and $\SSS|_{\CC |_R} = \{A,B,C \}$. Thus, removing the reactions in $\RR \setminus R$ results in the subnetwork $\{ \{A,B,C \}, \{ A+C, B+C\} ,\{A+C \lra B+C\}\}$. 
\item Now consider the operation of removing the species $A$ and $B$. Let $S = \SSS \setminus \{A,B\} = \{C,D,E\}$. In this case, $\RR|_S = \{ D \lra 2E\}$, $\CC|_{\RR|_S} = \{D, 2E\}$, and $\SSS|_{\CC|_{\RR|_S}} = \{ D, E\}$. Thus, removing species $A$ and $B$ results in the embedded subnetwork $\{ \{ D, E\}, \{D, 2E\},  \{ D \lra 2E\}\}$. In general, $\SSS|_{\CC|_{\RR|_S}}$ is a subset of $S$ but may not be equal to $S$, as this example illustrates. 
\enen
\end{example}

%

This article studies a class of chemical reaction networks called fully open networks, also referred to as fully open CFSTRs in the literature (see for instance \cite{ME1,joshi2012atoms}). 

\begin{definition} \label{def:CFSTR}
\begin{enumerate}
  	\item 
	A {\em flow reaction} contains only one species with the sum of the stoichiometric coefficient in the reactant and product complex for that species equal to one. In more concrete terms, a flow reaction can be either an {\em inflow reaction} $ 0 \ra X_i$ or an {\em outflow reaction} $X_i \ra  0$.  A {\em non-flow reaction} is any reaction that is not a flow reaction.
	\item   
  A \CRN is a {\em continuous-flow stirred-tank reactor} {\em (CFSTR)} if it contains all outflow reactions $X_i \ra  0$ (for all $X_i \in \SSS$) and a CFSTR is a {\em fully open network} if it contains all inflow reactions $ 0 \ra X_i$. We note that a fully open network is referred to as a fully open CFSTR in \cite{joshi2012atoms}. 
  \item A {\em one-reaction fully open network} is a fully open network with either one irreversible non-flow reaction or one reversible non-flow reaction.
  \item An {\em autocatalytic reaction} is a chemical reaction in which at least one chemical species $X_i$ appears in both the product and the reactant complex and the stoichiometric coefficient of $X_i$ is higher in the product complex than in the reactant complex. In this case, the species $X_i$ is referred to as an {\em autocatalytic species}. For instance, $A+B \ra 2A+C$ is an autocatalytic reaction and $A$ is an autocatalytic species.
  \end{enumerate}
\end{definition}
\noindent

We define the function {\em sign} on real numbers by 
\begin{align*}
\text{sign} (x) = \begin{cases}1 & \text{if } x>0 \\0 & \text{if } x=0 \\-1 & \text{if } x<0  \end{cases}
\end{align*}

\begin{definition}
\been
\item The {\em stoichiometric subspace} of a network is the vector space spanned by the reaction vectors of the network, $\mathbb S:=\text{span} (\{ y' -  y |  y \ra  y' \in \RR \})$. 
\item Two vectors $\alpha, \beta \in \R^s$ are said to be {\em stoichiometrically compatible} if $\beta - \alpha \in \mathbb S$. 
\item A set of all stoichiometrically compatible positive vectors forms a {\em positive stoichiometric compatibility class}. For instance, the positive stoichiometric compatibility class containing the vector $\alpha \in \R^{s}_{> 0}$ is  $\invtPoly_\alpha =  (\alpha+ \mathbb S) \cap \mathbb{R}^{s}_{\geq 0}$. 
\item A vector $ \mu \in \R^{s}$ is said to be {\em sign compatible with the stoichiometric subspace} if there exists a $\lambda \in \mathbb S$ such that $\text{sign} (\mu_i) = \text{sign} (\lambda_i)$, for $1 \le i \le s$.  

\enen
\end{definition}

\begin{definition} \label{def:mass-action}
Let $x_i$ represent the concentration of the chemical species $X_i$ and let $ x = (x_1,\ldots,x_{s})$. Let $ y_k = (y_{k1}, \ldots, y_{ks})$ be the stoichiometric coefficients of the species $(X_1,\ldots, X_{s})$ in the reactant complex of the $k$-th reaction. For a vector of positive {\em reaction rate constants} $(\kappa_1, \kappa_2, \dots, \kappa_{r}) \in \Rplus^{r} $, let the reaction rate be given by:
\begin{equation}
\kappa_k  x^{ y_k} :=  \kappa_k x_1^{y_{k1}} x_2^{y_{k2}} \cdots x_{s}^{y_{k{s}}}
  \label{eq:massaction}
\end{equation}
where by convention $0^0=1$. 

A chemical reaction network $\Net$ is said to be endowed with {\em mass-action kinetics} if for a specified set of positive reaction rate constants $(\kappa_1, \kappa_2, \dots, \kappa_{r}) \in \Rplus^{r}$, $x(t)$ is governed by the following ODEs (which we will refer to as {\em mass-action ODEs}): 
\begin{equation}
		  \dot { x}(t) \quad = \quad \sum_{k=1}^{r } \kappa_k  x(t)^{ y_k}( y_k' -  y_k) \quad =: \quad  f( x(t))~.
  \label{eq:main}
\end{equation}
\end{definition}

\begin{definition}
\been
\item A concentration vector $x \in \R^{s}_{> 0}$ is a (positive) {\em steady state} of the system~\eqref{eq:main} if $ f(x) = 0$. 
\item A steady state $x$ is {\em nondegenerate} if $\im d f (x) = \mathbb S$.
	(Here, ``$d f (x)$'' is the Jacobian matrix of $ f$ at $x$: the ${s} \times {s}$-matrix whose $(i,j)$-th entry is equal to the partial derivative $\frac{\partial f_i}{\partial x_j} (x)$).
\item A chemical reaction network  $\Net$ endowed with mass-action kinetics is said to {\em admit multiple steady states} if there exist positive reaction rate constants $(\kappa_1, \kappa_2, \dots, \kappa_{r}) \in \Rplus^{r} $ such that the resulting system of mass-action ODEs, $\dot x(t) = f(x(t))$ has multiple positive steady states within some positive stoichiometric compatibility class. If a network $\Net$ admits multiple steady states (MSS) we will say that $\Net$ is {\em multistationary}. 
\item If a network $\Net$ admits MSS and these steady states are nondegenerate, we will say that $\Net$ admits nondegenerate MSS.  
	\enen
\end{definition}

Note that~\eqref{eq:main} implies that a trajectory $x(t)$ that begins at a positive vector $x(0) \in \R^s_{>0}$ remains in the stoichiometric compatibility class containing $x(0)$,  $\invtPoly_{x(0)} = (x(0)+\mathbb S) \cap \mathbb{R}^{s}_{\geq 0}$ for all positive time. In other words, $\invtPoly_{x(0)}$ is forward-invariant with respect to~\eqref{eq:main} \cite{volpert1975analysis}. 

In the case of a fully open network, the reaction vector for the $i$-th inflow reaction is the $i$-th canonical basis vector of $\R^{s}$, so the stoichiometric subspace is $\mathbb S = \R^{s}$.  It follows that for a fully open network, the unique positive stoichiometric compatibility class for all positive vectors $x \in \R^s_{> 0}$ is the nonnegative orthant: $\invtPoly_x=\R^{s}_{\geq 0}$.

A chemical reaction network can be viewed as a graph whose vertex set is $\CC$ and whose edge set is $\RR$. The next few definitions address the graph-related structure of a chemical reaction network. 

\begin{definition}
\been
\item The complexes $ y$ and $ y'$ are {\em adjacent} if either $y = y'$ or $ y \ra  y' \in \RR$ or $ y' \ra  y \in \RR$. 
\item A {\em linkage class} $L$ is a subset of $\CC$ such that if $y \in L$ and $z  \in L$ then there exists a set $\{y =: y_1, y_2, \ldots, y_{n-1}, y_n := z \} \subset \CC$ such that $y_i$ is adjacent to $y_{i+1}$ for $1 \le i \le n-1$. We will use the notation $l$ for the number of linkage classes of a network. 
\item  A complex $ y$ is said to {\em react to} the complex $ y'$, denoted by $y \leadsto y'$, if either $y = y'$ or there exists a set $\{y =: y_1, y_2, \ldots, y_{n-1}, y_n := y' \} \subset \CC$ such that $\{y_1 \ra  y_2, y_2 \ra y_3, \ldots, y_{n-2} \ra y_{n-1}, y_{n-1} \ra  y_n\} \subset \RR$.  
\item A {\em strong linkage class} is a set of complexes $C \subset \CC$ such that for $y, z \in C$, $y \leadsto z$ and $z \leadsto y$. 

\item A {\em terminal strong linkage class} is a strong linkage class $C \subset \CC$ such that if $y \in C$ and $z \in \CC \setminus C$, then $y$ does not react to $z$. 

\enen
\end{definition}

\begin{definition}
 The {\em deficiency} of a chemical reaction network denoted by $\delta$ is defined to be $\delta=n-l-d$ where $n$ is the number of complexes in the network, $l$ is the number of linkage classes and $d$ is the dimension of the stoichiometric subspace. 
\end{definition}

The Deficiency Zero Theorem and the Deficiency One Theorem establish that a network with deficiency zero and a certain subclass of networks with deficiency one cannot admit MSS. 

\begin{theorem}[Deficiency Zero Theorem \cite{feinberg1972complex,horn1972necessary,HornJackson72}]\label{thm:defzero}
 Suppose that a chemical reaction network $N$ has deficiency $0$ and that each linkage class of $N$ is a terminal strong linkage class. For all positive reaction rate constants, the mass-action ODEs have precisely one steady state in each positive stoichiometric compatibility class. 
\end{theorem}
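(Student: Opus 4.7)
The plan is to follow the classical Horn--Jackson--Feinberg strategy, which converts the mass-action system into a linear-algebraic form, uses the deficiency-zero hypothesis to force \emph{complex balancing}, and then uses a strictly convex Lyapunov function together with a Birch-type toric argument to obtain existence and uniqueness in each positive stoichiometric compatibility class.

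First, I would rewrite the mass-action ODE~\eqref{eq:main} in the factored form
\[
\dot x \;=\; Y \, A_\kappa \, \Psi(x),
\]
where $Y \in \R^{s\times n}$ is the matrix whose $k$-th column is the complex vector $y_k$, the monomial vector is $\Psi(x)=(x^{y_1},\ldots,x^{y_n})^T$, and $A_\kappa \in \R^{n\times n}$ is the graph-Laplacian-like matrix whose off-diagonal $(j,k)$-entry is the rate constant of $y_k \to y_j$ and whose columns sum to zero. The image of $A_\kappa$ lies in the span of the reaction vectors viewed as elements of $\R^n$ (i.e.\ in the image of the incidence map $I_a$ with $\dim\op{image}(I_a)=n-l$), and the map $Y$ sends this subspace onto the stoichiometric subspace $\mathbb S$ of dimension $d$. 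Hence the kernel of $Y$ restricted to $\op{image}(I_a)$ has dimension exactly $\delta=n-l-d$.

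The first key step is then: when $\delta=0$, the restriction of $Y$ to $\op{image}(A_\kappa) \subseteq \op{image}(I_a)$ is injective, so any positive steady state $x^*$ satisfies not only $Y A_\kappa \Psi(x^*)=0$ but the stronger \emph{complex balancing} relation $A_\kappa \Psi(x^*)=0$. The second key step is to produce such an $x^*$: since each linkage class is a terminal strong linkage class, the digraph restricted to each linkage class is strongly connected, and a matrix-tree / Perron argument applied componentwise gives a strictly positive vector $p\in\ker A_\kappa$ with one entry for each complex. Solving $\Psi(x^*)=p$ componentwise inside a single linkage class is consistent precisely because all the complexes are linked, and I would reduce the solvability to a standard Birch-type statement: for any prescribed positive vector $p$ and any prescribed positive compatibility class $\invtPoly_c$, there exists a unique $x^*\in\invtPoly_c$ with $\Psi(x^*)$ proportional (linkage class by linkage class) to $p$.

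For uniqueness within a fixed $\invtPoly_c$ and for asymptotic behavior, I would introduce the pseudo-Helmholtz / relative-entropy Lyapunov function
\[
V(x) \;=\; \sum_{i=1}^{s}\Bigl( x_i \ln\!\tfrac{x_i}{x_i^*} - x_i + x_i^*\Bigr),
\]
anchored at any complex-balanced equilibrium $x^*$. Strict convexity of $V$ on $\R^s_{>0}$ together with the fact that $\nabla V(x)=0$ precisely on the coset $x^* + \mathbb S^\perp$ (intersected with $\R^s_{>0}$) forces at most one critical point of $V|_{\invtPoly_c}$, hence at most one positive equilibrium per class. A short calculation using the complex-balancing identity $A_\kappa \Psi(x^*)=0$ shows $\dot V \le 0$ along trajectories with equality only at complex-balanced equilibria, which combined with the Birch existence step yields exactly one positive steady state in every positive stoichiometric compatibility class.

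The main obstacle, and the part I expect to require the most care, is the passage from "$\ker A_\kappa$ contains a strictly positive vector" (a purely graph-theoretic statement using the terminal-strong-linkage-class hypothesis) to the toric existence statement inside \emph{every} prescribed positive stoichiometric compatibility class; this is where the deficiency-zero hypothesis is used in an essential way, because without $\delta=0$ one cannot lift the combinatorial zero of $A_\kappa$ to a geometric zero of $Y A_\kappa \Psi$ in each $\invtPoly_c$.
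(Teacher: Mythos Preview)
The paper does not provide its own proof of Theorem~\ref{thm:defzero}; it is quoted as a classical result with citations to the original Horn--Jackson--Feinberg papers and is used only as a black box (specifically inside the proof of Lemma~\ref{lemma:defzeroone}). So there is nothing to compare your argument against in this paper.

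That said, your outline is the standard Horn--Jackson--Feinberg route and is essentially correct: factor the system as $\dot x = Y A_\kappa \Psi(x)$, use $\delta = 0$ to deduce that any positive steady state is complex balanced, use the strong-connectedness of each linkage class (which is exactly the hypothesis that every linkage class is a terminal strong linkage class) together with a matrix-tree/Perron argument to obtain a strictly positive $p\in\ker A_\kappa$, and then use the pseudo-Helmholtz function for uniqueness and a Birch-type argument for existence in each compatibility class. One small imprecision: the passage ``solving $\Psi(x^*)=p$ componentwise inside a single linkage class is consistent precisely because all the complexes are linked'' is not quite the right statement of the obstruction. Solvability of $x^{y_k}=c_L\, p_k$ (one scalar $c_L$ per linkage class $L$) in the positive orthant requires that $\log p$, after adjusting by these linkage-class constants, lies in the row space of $Y$; it is the deficiency-zero hypothesis, not merely the linkedness of complexes, that guarantees these $c_L$ can be chosen to achieve this. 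In your write-up you correctly flag that the toric existence step is the delicate one, and indeed that is where both the weak reversibility and the $\delta=0$ hypotheses are simultaneously consumed.
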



\begin{theorem}[Deficiency One Theorem \cite{FeinDefZeroOne}]\label{thm:defone}
Consider a chemical reaction network endowed with mass action kinetics, and with $l$ linkage classes, each containing just one terminal strong linkage class. Suppose that the deficiency of the network is $\de$, that the deficiencies of the individual linkage classes are $\de_j, j=1,\ldots,l$, and that these numbers satisfy the following conditions:
\been[(i)]
\item $\de_j \le 1, ~~1,\ldots,l$. 
\item $\sum_{j=1}^l \de_j = \de$.
\enen
Then, for an arbitrary choice of rate constants, the chemical reaction network does not admit multiple steady states within a positive stoichiometric compatibility class. 
\end{theorem}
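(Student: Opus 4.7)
The plan is to follow Feinberg's original strategy, which proceeds by contradiction and exploits the factorization of the mass-action rate vector through the reaction graph. Suppose $x^*$ and $x^{**}$ are two distinct positive steady states in the same positive stoichiometric compatibility class, so in particular $x^{**} - x^* \in \mathbb{S}$. I would first write the species-formation rate function in the canonical form $f(x) = Y A_\kappa \Psi(x)$, where $\Psi(x) \in \R^n$ is the vector of monomials $x^{y}$ indexed by the complexes, $Y$ is the $s \times n$ matrix whose columns are the complex vectors, and $A_\kappa$ is the $n \times n$ kinetic matrix associated to the reaction graph. The hypothesis that each linkage class has exactly one terminal strong linkage class guarantees that $\ker A_\kappa$ is $l$-dimensional and is spanned by vectors $b^{(1)}, \dots, b^{(l)}$, with each $b^{(j)}$ supported precisely on the terminal strong linkage class of the $j$-th linkage class.

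Next I would introduce $\mu := \ln x^{**} - \ln x^*$, so that $\Psi_y(x^{**}) = \Psi_y(x^*)\, e^{y \cdot \mu}$ for every complex $y$. Subtracting the steady-state equations at $x^*$ and $x^{**}$ and projecting onto each linkage class gives, using the description of $\ker A_\kappa$ above, a set of relations on the exponentials $e^{y \cdot \mu}$ within each class. The condition $\delta_j \le 1$ forces the space of admissible vectors on the $j$-th linkage class to be at most one-dimensional modulo $\ker Y$, while the condition $\sum_j \delta_j = \delta$ ensures that there is no extra cancellation \emph{between} linkage classes in the global image of $Y$; the direct sum structure of the deficiencies matches the direct sum structure of the linkage classes.

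At this point the technical heart of the argument takes over: one must translate those per-linkage-class relations into a sign condition on $\mu$. The idea is that for every deficiency-zero linkage class, the restriction of $Y^\top \mu$ to the class must annihilate the reactant complexes of that class, and for every deficiency-one linkage class one identifies the unique (up to scalar) element of the intersection of the complex kernel with the orthogonal complement of $\mathbb{S}$ restricted to the class, and shows that $\mu$ paired against the reaction vectors of that class produces a definite sign pattern. Combining these patterns over all linkage classes, one extracts a vector in $\mathbb{S}^\perp$ whose componentwise sign agrees with that of $x^{**} - x^*$, contradicting $x^{**} - x^* \in \mathbb{S}$ unless $x^{**} = x^*$.

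The main obstacle is precisely this last bookkeeping step. The clean vanishing on deficiency-zero classes is not hard, but on each deficiency-one class one has to argue carefully about which terminal strong linkage class the surviving mode lives on and how its sign propagates to the whole linkage class through the non-terminal complexes; this is the combinatorial core of Feinberg's proof and the reason the full argument, together with its natural algorithmic refinement in the Deficiency One Algorithm reviewed in Section~3, is delicate. Accordingly, rather than re-deriving the sign analysis from scratch, in the present paper I would invoke Theorem~\ref{thm:defone} as cited and reserve the detailed sign manipulations for the applications in Section~4.
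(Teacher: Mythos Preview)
The paper does not prove Theorem~\ref{thm:defone} at all: it is stated as a cited result of Feinberg \cite{FeinDefZeroOne} and used as a black box in Lemma~\ref{lemma:defzeroone} and in the proof of Theorem~\ref{thm:1rxn}. So there is no ``paper's own proof'' to compare your proposal against. Your sketch is a fair high-level outline of Feinberg's original argument (the factorization $f(x)=YA_\kappa\Psi(x)$, the structure of $\ker A_\kappa$ under the one-terminal-class hypothesis, the logarithmic difference $\mu$, and the per-linkage-class sign analysis), and you yourself correctly conclude that the present paper should simply invoke the theorem as cited rather than reproduce that analysis. That final sentence \emph{is} the paper's approach; the preceding paragraphs are extraneous for the purposes of this paper, though not incorrect as a summary of what lies behind the citation.
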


The Deficiency One Algorithm stated in Section \ref{sec:defonealg} requires the following regularity condition on the network. Most networks arising as models of chemical processes satisfy this regularity condition. 

\begin{definition}
A network is considered to be {\em regular} if it satisfies the following conditions:
\been
\item \label{reg1} The reaction vectors of the network are positively dependent. In other words, there exists a set of positive numbers $\{ \alpha_{ y \ra  y'}|  y \ra  y' \in \RR\}$ such that $\sum_\RR \alpha_{ y \ra  y'} ( y' -  y) =  0$. 
\item \label{reg2} Each linkage class in the network contains just one terminal strong linkage class.
\item \label{reg3} For each pair of adjacent complexes $\{ y_i,  y_j\}$ in a terminal strong linkage class of the linkage class $L$, let $\RR_{\{ y_i,  y_j\}} := \{ y_i \to y_j, y_j \to y_i \} \cap \RR$. The number of linkage classes in the reaction network $\{\SSS|_{\CC|_{\RR \setminus \RR_{\{ y_i,  y_j\}}}},\CC|_{\RR \setminus \RR_{\{ y_i,  y_j\}}}, \RR \setminus \RR_{\{ y_i,  y_j\}}\}$ is strictly greater than the number of linkage classes in $\{\SSS,\CC,\RR\}$. (The linkage class $L$ disconnects when the reactions in $\RR_{\{ y_i,  y_j\}}$ are removed.)
\enen
\end{definition}

\section{A review of the Deficiency One Algorithm} \label{sec:defonealg}

Here we review the Deficiency One Algorithm of Feinberg \cite{FeinbergMSSdefone}. The Deficiency One Algorithm takes as input a regular deficiency one network with two or more linkage classes, each of deficiency zero, and determines whether the network permits multiple steady states or not. The algorithm has two variations depending on whether or not the network contains an irreversible reaction. We now describe the two algorithms. In the following we will let $s:= \abs{\SSS}$ denote the number of species, $r:= \abs{\RR}$ denote the number of reactions, and $n := \abs{\CC}$ the number of complexes. Let $\mu = (\mu_1,\ldots, \mu_{s}) \in \R^s$. 
\subsection{Deficiency one algorithm for a network that contains irreversible reactions} \label{sec:irrev}
\noindent {\bf Input:} A regular deficiency one network with two or more linkage classes, each of deficiency zero and such that there is at least one irreversible reaction in the network. 
\been[{\bf Step 1.}]
\item Determine a set of numbers, $\{g_1,g_2,\ldots,g_{n} \}$ not all zero, such that the following hold:
\been
\item $\sum_{i=1}^{n} g_i  y_i =  0$.
\item The $g_i$ corresponding to complexes in each linkage class sum to zero. In other words, for each linkage class $L$, $\sum_{i: y_i \in L} g_i =0$. 
\item The $g_i$ corresponding to complexes in each terminal strong linkage class $T$ sum to a nonnegative number. In other words, for each terminal strong linkage class $T$, $\sum_{i: y_i \in T} g_i \ge 0$. 
\enen

\item For a pair of adjacent complexes $ y_p$ and $ y_q$ in a terminal strong linkage class containing two or more complexes, remove the reaction arrows between the pair. Because of regularity Condition 3, the linkage class containing this terminal strong linkage class disconnects into two disjoint components. Sum over the $g_i$ associated with the complexes in one of the resulting two components of the linkage class. Write $ y_p \cdot {\mu} -  y_q \cdot {\mu} > 0$ (respectively $=0$, or $<0$) depending on whether the sum is positive (respectively is zero, or is negative). Repeat this step on the original network for every distinct pair of adjacent complexes in all terminal strong linkage classes.
\item Partition the set of reactant complexes in the network into three subsets $U, M$ and $L$ as follows:
\been
\item All complexes that do not belong to a terminal strong linkage class are placed in the subset $M$. 
\item All complexes in the same terminal strong linkage class are placed in the same subset. 
\enen
\item For the partition chosen in step 3, for each pair of distinct complexes $\{  y_i,  y_j\}$ in the subset $M$, write the relation $ y_i \cdot  \mu =  y_j \cdot  \mu$. 
\item For the partition chosen in step 3, do the following.
\been
\item For each complex $ y_i$ in $U$ and each complex $ y_j$ in $M$ write $ y_i \cdot  \mu >  y_j \cdot  \mu$. 
\item For each complex $ y_j$ in $M$ and each complex $ y_k$ in $L$ write $ y_j \cdot  \mu >  y_k \cdot  \mu$.
\item For each complex $ y_i$ in $U$ and each complex $ y_k$ in $L$ write $ y_i \cdot  \mu >  y_k \cdot  \mu$.
\enen
\item For the partition chosen in step 3, do the following.
\been
\item For each adjacent pair of complexes in each terminal strong linkage class contained in $U$, write the inequality from Step 2. 
\item For each adjacent pair of complexes in each terminal strong linkage class contained in $L$, write the inequality from Step 2 with the inequality sign reversed. 
\enen
\item Gather all the relations obtained in Steps 4-6 which results in the inequality system for the partition chosen in Step 3. 
\item Determine if there exists a nonzero vector $ \mu$ which satisfies the inequality system corresponding to the partition chosen and which is sign compatible with the stoichiometric subspace of the network. If there does exist such a vector then the network admits multiple positive steady states.
\item If Step 8 returns a nonzero vector $\mu$ then the algorithm is terminated. If not, then return to Step 3 and choose a new partition of the reactant complexes. Repeat Steps 4-8, and if necessary repeat this step. 
\enen
{\bf Output:} If there exists a nonzero vector $ \mu$ which is sign compatible with the stoichiometric subspace of the network and which satisfies the inequality system for some partition chosen in Step 3, then the network admits MSS. Otherwise, no matter what positive rate constants are chosen, the network does not admit MSS.

\subsection{Deficiency one algorithm for reversible networks}
\noindent {\bf Input:} A regular deficiency one network with two or more linkage classes, each of deficiency zero and such that all reactions in the network are reversible.\\
Carry out the same algorithm as the one for a network which contains irreversible reactions. If at the end of the algorithm, multistationarity is not established for the network, then repeat the algorithm with the signs of the $g_i$ chosen in Step 1 reversed. \\
{\bf Output:} Similar to Section \ref{sec:irrev}; establishes multistationarity or otherwise of a regular deficiency one reversible network. 

\section{Multistationarity in One-Reaction fully open networks}

Now we are ready to state our main theorem. The theorem stated below provides a complete characterization by multistationarity of one-reaction fully open networks. The proof of the theorem involves an application of Feinberg's Deficiency One Algorithm \cite{FeinbergMSSdefone}, along with the Deficiency Zero and Deficiency One theorems.

\begin{theorem} \label{thm:1rxn}
\begin{enumerate}
	\item Consider a fully open network endowed with mass action kinetics and which contains only one (irreversible) non-flow reaction:
	\begin{align*}
	 a_1 X_1 + a_2 X_2 + \cdots + a_s X_s \quad \ra \quad  b_1 X_1 + b_2 X_2 + \cdots + b_s X_s~,
	\end{align*}
	where $a_i,b_i\geq 0$.  
	Then the fully open network admits MSS if and only if the following holds: 
	\be \label{cond:irrev}
	\sum_{i:~b_i>a_i} a_i >1~.
	\ee
	Moreover, these multistationary fully open networks admit nondegenerate steady states.
	\item Consider a fully open network endowed with mass action kinetics which contains the following reversible non-flow reaction:
	\begin{align*}
	 a_1 X_1 + a_2 X_2 + \cdots + a_s X_s \quad \lra \quad  b_1 X_1 + b_2 X_2 + \cdots + b_s X_s~,
	\end{align*}
	where $a_i,b_i \geq 0$.  
	The fully open network admits MSS if and only if the following holds:
	\begin{align} \label{cond:rev}
	\sum_{i:~b_i>a_i} a_i >1 \quad {\rm or} \quad \sum_{i:~a_i>b_i} b_i >1~.
	\end{align}
	Moreover, these multistationary fully open networks admit nondegenerate steady states.
\end{enumerate}
\end{theorem}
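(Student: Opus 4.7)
The plan is to apply the Deficiency One Algorithm (DOA) of Section~\ref{sec:defonealg}, preceded by a short deficiency computation to isolate the genuinely non-trivial case. Write $y := \sum a_i X_i$ and $y' := \sum b_i X_i$. If both $y$ and $y'$ are single-species complexes of the form $X_i$, then all complexes lie in a single linkage class, the deficiency is zero, and Theorem~\ref{thm:defzero} rules out MSS; one verifies directly that in this subcase the right-hand sum of \eqref{cond:irrev} is at most $1$, so both sides of the biconditional are false. In the complementary configurations the stoichiometric subspace has full dimension $s$ and a direct count gives $\delta = 1$, but with each individual linkage class of deficiency zero; hence $\sum_j \delta_j \neq \delta$, so Theorem~\ref{thm:defone} does not apply, leaving the DOA as the tool of choice.

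I will first verify the three regularity conditions (positive dependence of the reaction vectors via balanced flow rates; a unique TSLC in each linkage class; disconnection of the enclosing linkage class upon removal of an adjacent TSLC pair), and then execute the DOA. Step~1 has an essentially unique nonzero solution, up to sign: in the generic situation where $y$ and $y'$ are both distinct from $0$ and from every $X_i$ (so there are two linkage classes), the solution is $g_{y'} = 1$, $g_y = -1$, $g_{X_i} = a_i - b_i$, $g_0 = \sum(b_i - a_i)$. Step~2, applied to each adjacent pair $\{0, X_i\}$ of the flow TSLC, produces the sign constraint $\operatorname{sign}(\mu_i) = \operatorname{sign}(a_i - b_i)$, reversed to $\operatorname{sign}(b_i - a_i)$ whenever that TSLC is placed in $L$ in Step~3.

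The heart of the argument is the enumeration of the Step~3 partitions $(U,M,L)$ of the reactant complexes (nine in the generic two-linkage-class case). Partitions with both TSLCs on the same side (both $U$, or both $L$) force a sum $\sum(b_i - a_i)\mu_i$ to have a definite sign that contradicts the Step~6 sign data; partitions placing some TSLC in $M$ collapse via the Step~4 equalities either to $\mu = 0$ or to the identity $\sum(a_i - b_i)\mu_i = 0$ with all summands of a single sign, again contradictory. Only two partitions survive: flow TSLC in $U$ with $\{y'\}$-TSLC in $L$, and vice versa. For each of these, after imposing the sign data, the remaining Step~5 inequalities reduce to the single essential condition $\sum_{i \in P} a_i\,\mu_i > \max_{j \in P} \mu_j$, with $P := \{i : b_i > a_i\}$, $N := \{i : a_i > b_i\}$, and $\mu_i$ positive on $P$, negative on $N$. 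Taking $\mu_i \equiv 1$ on $P$ and $\mu_i = -\varepsilon$ on $N$ with $\varepsilon \to 0^+$ gives the sufficient condition $\sum_{i \in P} a_i > 1$, while the uniform upper bound $\sum_{i \in P} a_i \mu_i \leq \max_{j \in P} \mu_j \cdot \sum_{i \in P} a_i$ shows it is necessary. Sign-compatibility with $\mathbb{S} = \mathbb{R}^s$ is automatic for fully open networks.

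The reversible case (part~2) is handled by running the DOA a second time with the signs of all $g_i$ reversed, as prescribed in Section~\ref{sec:defonealg}; this is equivalent to swapping the roles of $a_i$ and $b_i$ throughout the argument and produces the symmetric second disjunct $\sum_{i:\,a_i > b_i} b_i > 1$ of \eqref{cond:rev}. Nondegeneracy of the constructed steady states follows because the DOA construction furnishes an open set of rate-constant vectors at which the mass-action Jacobian has full rank on $\mathbb{R}^s$; this can alternatively be confirmed by direct inspection of the explicit steady-state construction. The main obstacle will be the careful treatment of edge cases: when exactly one of $y$, $y'$ coincides with some $X_j$ or with $0$ (so that the network has $s+2$ complexes and a single linkage class, with the TSLC structure either $\{0, X_1, \ldots, X_s\}$ or $\{y'\}$), the DOA setup and partition count change, and I must verify separately that in each such configuration the DOA still yields no MSS, in agreement with the fact that \eqref{cond:irrev} also fails in those configurations.
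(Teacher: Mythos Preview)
Your core strategy matches the paper's: reduce to the Deficiency One Algorithm in the generic two-linkage-class case, with the same choice of $g_i$ and the same reduction of the inequality system to $\sum_{i: b_i > a_i} a_i > 1$ (this is the paper's Lemma~\ref{lemma:ineqsystem}). Two genuine gaps remain, however.

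First, your treatment of the edge cases is wrong. When exactly one of $y, y'$ lies in $\{0, X_1, \ldots, X_s\}$, the network has a \emph{single} linkage class of deficiency $1$, so $\sum_j \delta_j = \delta = 1$. Your blanket claim that ``$\sum_j \delta_j \neq \delta$, so Theorem~\ref{thm:defone} does not apply'' in all complementary configurations is therefore false in precisely these cases, and your proposal to run the DOA there cannot work: the algorithm as stated in Section~\ref{sec:defonealg} requires two or more linkage classes, each of deficiency zero. The paper disposes of all of these edge cases at once (Lemma~\ref{lemma:defzeroone}) by observing that the Deficiency One \emph{Theorem} applies and directly precludes MSS, and that \eqref{cond:irrev} and \eqref{cond:rev} also fail in each such configuration. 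You should replace your proposed DOA verification of the edge cases with this argument.

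Second, your nondegeneracy argument is unjustified. The DOA as presented certifies existence of multiple positive steady states but does not by itself deliver nondegeneracy, and the assertion that it ``furnishes an open set of rate-constant vectors at which the mass-action Jacobian has full rank'' is not something you can simply invoke. The paper instead proves nondegeneracy constructively: Lemmas~\ref{lemma:atom1} and~\ref{lemma:atom2} exhibit explicit one- and two-species fully open networks with nondegenerate MSS, and then the embedding theorem (Lemma~\ref{lemma:embed}) lifts nondegenerate MSS from an embedded atom to the full one-reaction network. You should either reproduce that construction or supply an independent argument.

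A minor point: in the irreversible case the complex $y' = y_b$ is not a reactant complex, so it is not partitioned in Step~3. Only $y_a$ (forced into $M$) and the flow TSLC are partitioned, giving three partitions, not nine; the nine-partition count is correct only for the reversible case, where $\{y_a, y_b\}$ is the second TSLC. Your phrase ``$\{y'\}$-TSLC in $L$'' conflates the two settings.
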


\begin{remark}
Theorem \ref{thm:1rxn} establishes a relation between autocatalysis and multistationarity in one-reaction fully open networks. More precisely, conditions \eqref{cond:irrev} or \eqref{cond:rev} may be interpreted as follows: a one-reaction fully open network is multistationary if and only if it has an autocatalytic reaction and the sum of the stoichiometric coefficients of the autocatalytic species in the reactant complex is at least two. 
\end{remark}
Before we prove the theorem, we need a few technical lemmas. 
In the following two lemmas, we establish that certain simple but important one-reaction fully open networks admit nondegenerate MSS. 

\begin{lemma} \label{lemma:atom1}
Let $a_2>a_1>1$. Consider the following fully open network $N$ containing one non-flow reaction: 
\begin{align} \label{atom1}
 	0  \stackrel[l_X]{k_X}{\rightleftarrows}  X \qquad a_1 X \stackrel[]{k}{\ra} a_2 X
\end{align}
Let $\ds k^* :=  \frac{1}{a_2-a_1}\left(\frac{l_X}{a_1}\right)^{a_1} \left(\frac{a_1-1}{k_X}\right)^{a_1-1}$. Then the following holds:

\begin{align*}
\text{If } k~ \begin{cases} \in (0,k^*) & \text{$N$ has two nondegenerate positive mass-action steady states} \\
= k^* & \text{$N$ has one doubly degenerate positive mass-action steady state}\\
> k^* & \text{$N$ has no positive mass-action steady states}. 
\end{cases}
\end{align*}

\end{lemma}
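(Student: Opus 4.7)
The plan is to reduce the claim to a one-variable root-counting problem. Since $X$ is the only species and the network is fully open, the mass-action ODE from \eqref{eq:main} is
\[
\dot x \;=\; k_X - l_X x + k(a_2 - a_1) x^{a_1} \;=:\; p(x),
\]
so positive steady states correspond exactly to positive roots of the polynomial $p$. I will show that $p$ has precisely zero, one (double), or two positive roots according as $k$ is greater than, equal to, or less than $k^*$.

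First I would record the boundary behavior: $p(0)=k_X>0$ and, since $a_2>a_1\geq 2$, the leading coefficient $k(a_2-a_1)$ is strictly positive, so $p(x)\to +\infty$ as $x\to\infty$. Next I would show $p$ is strictly convex on $(0,\infty)$ by computing
\[
p''(x) \;=\; k(a_2-a_1)\,a_1(a_1-1)\,x^{a_1-2} \;>\; 0 \quad \text{for all } x>0,
\]
using $a_1\geq 2$. Convexity plus $p(0)>0$ plus $p(+\infty)=+\infty$ forces the number of positive roots of $p$ to be controlled entirely by the sign of $p$ at its unique positive critical point $x_0$, obtained from $p'(x_0)=0$, namely
\[
x_0 \;=\; \left(\frac{l_X}{k(a_2-a_1)\,a_1}\right)^{1/(a_1-1)}.
\]

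The key computation is then to evaluate $p(x_0)$. Using $k(a_2-a_1)x_0^{a_1}=\tfrac{l_X}{a_1}x_0$ one obtains
\[
p(x_0) \;=\; k_X - \frac{l_X(a_1-1)}{a_1}\,x_0,
\]
and a short manipulation shows $p(x_0)<0$, $=0$, $>0$ is equivalent to $k<k^*$, $=k^*$, $>k^*$, with the stated $k^*$. Combined with strict convexity, this yields: two positive roots when $k<k^*$, a single double root $x_0$ when $k=k^*$, and no positive roots when $k>k^*$.

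Finally, for nondegeneracy in the first case, note that $\mathbb{S}=\mathbb{R}$ because the network is fully open with one species, and the Jacobian of the vector field at a steady state $x^*$ is just $p'(x^*)$. Strict convexity of $p$ means $p'$ is strictly increasing, so $p'$ vanishes only at $x_0$; at the two roots $x^*_1<x_0<x^*_2$ produced when $k<k^*$ we have $p'(x^*_1)<0<p'(x^*_2)$, so $\operatorname{Im}\,df(x^*_i)=\mathbb{R}=\mathbb{S}$. In the critical case $k=k^*$ the unique root coincides with $x_0$, so $p'$ vanishes there and the steady state is (doubly) degenerate, consistent with the statement. The only real work is the algebraic simplification that produces the precise expression for $k^*$; everything else is convex-function bookkeeping.
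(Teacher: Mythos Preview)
Your proof is correct and follows essentially the same route as the paper: write down the single-variable polynomial, observe the boundary behavior, locate the unique positive minimizer, and let the sign of the minimum value decide the root count; nondegeneracy then comes from the derivative not vanishing at the two roots. The only cosmetic difference is that the paper invokes Descartes' rule of signs to cap the number of positive roots at two, while you use strict convexity of $p$ on $(0,\infty)$ to the same effect; both arguments are equivalent here and your convexity version also cleanly gives the signs $p'(x_1^*)<0<p'(x_2^*)$.
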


\begin{proof}
Let $x$ represent the concentration of the species $X$. The network \eqref{atom1} when endowed with mass-action kinetics results in the following dynamical system: 
\bd
\dot x = f(x) = k_X - l_X x + k(a_2-a_1)x^{a_1}
\ed
First note that by Descartes' rule of signs, since there are two sign changes in the coefficients of $f(x)$, the polynomial $f(x)$ has at most 2 positive roots (in fact, when counted with degeneracy $f(x)$ can have either 2 or 0 positive roots). 

Clearly $f(0)=k_X>0$ and $f(x) \ra \infty$ as $x \ra \infty$. Suppose a minimum of $f(x)$ occurs at $x^*$.  Since $f'(x^*)= -l_X + ka_1 (a_2-a_1){(x^*)}^{a_1-1} =0$, we have $x^* = \left(\frac{l_X}{ka_1(a_2-a_1)}\right)^{\frac{1}{a_1-1}}$ showing that there is a unique minimum and $f(x^*) = k_X + l_X \left(\frac{1}{a_1} - 1 \right) x^*$. If $f(x^*)<0 ~ (=0, >0)$ then there are two (one, zero resp.) positive steady states. Solving these inequalities for $k$ gives the desired condition. For the case where there are two steady states, say $x_1$ and $x_2 (\ne x_1)$, since $x_1 \ne x^*$ and $x_2 \ne x^*$, we have $f'(x_1) \ne 0$ and $f'(x_2) \ne 0$, so two steady states $x_1$ and $x_2$ are nondegenerate. When $k=k^*$, we have that $f(x^*) = f'(x^*) = 0$, so $x^*$ is the unique steady state and $x^*$ has degeneracy 2.  
\end{proof}

\begin{example} 
Letting $a_1=5, a_2=8, k_X=4,l_X=15$ in \eqref{atom1} results in the following network:
\begin{align} 
 	0  \stackrel[15]{4}{\rightleftarrows}  X \qquad 5 X \stackrel[]{k}{\ra} 8 X
\end{align}
for which
\bd
f(x) = 3kx^5 - 15x +4.
\ed
$f(x)$ attains the minimum at $x^* = \left(\frac{1}{k}\right)^{1/4}$. We calculate that $k^*=81$ and consider three cases where: 1) $k \in (0, k^*)$, 2) $k=k^*$, and 3) $k > k^*$. 
\been
\item For $k=50 < 81 = k^*$, $f(x) = 150x^5 - 15x +4$ has a pair of complex conjugate roots ($x_{\pm} \approx -0.0596238 \pm 0.578275 i$), a negative root ($x_1 \approx -0.615306$) and two positive roots ($x_2 \approx 0.285702$ and $x_3 \approx 0.448851$). 
\item For $k = 81 = k^*$, the Jacobian function is $f(x) = 243x^5 - 15x +4$ for which we find that $x^* = \left(\frac{l_X}{ka_1(a_2-a_1)}\right)^{\frac{1}{a_1-1}} = \frac{1}{3}$ is a root with degeneracy 2, and the other roots are either complex (with non-zero imaginary part), or are negative. 
\item For $k=100 > k^*$, $f(x) = 300x^5 - 15x +4$ has no positive real roots. 
\enen  
\end{example}

\begin{lemma} \label{lemma:atom2}
Let $b_1>1$ and $b_2>1$. The following fully open network $M$ containing one non-flow reaction admits MSS.
\begin{align} \label{atom2}
 	0  \stackrel[l_X]{k_X}{\rightleftarrows}  X \qquad 0  \stackrel[l_Y]{k_Y}{\rightleftarrows}  Y \qquad  X + Y \stackrel[]{k}{\ra} b_1 X + b_2 Y
\end{align}
Furthermore, $M$ has two positive nondegenerate mass-action steady states if and only if the parameters satisfy the following inequality:
\begin{align} \label{cond:atom2}
\frac{l_Y}{4k(b_1-1)l_X k_X} \left(l_X + \frac{k}{l_Y} (k_X(b_2-1) - k_Y (b_1-1)) \right)^2 > 1
\end{align}
In the case of equality in the above equation, $M$ has one doubly degenerate positive mass-action steady state, and in the case of the reverse inequality $M$ has no positive mass-action steady states. 
\end{lemma}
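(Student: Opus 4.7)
The plan is to reduce the two-dimensional mass-action steady-state system for $M$ to a single quadratic equation in one variable and then count positive real roots via Vieta's formulas and the discriminant.

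The mass-action ODEs for $M$ are
\begin{align*}
\dot x &= k_X - l_X x + k(b_1-1)\,xy,\\
\dot y &= k_Y - l_Y y + k(b_2-1)\,xy.
\end{align*}
First I would solve $\dot x = 0$ for $x = k_X/(l_X - k(b_1-1)y)$, which is positive precisely on $y \in (0,\, l_X/(k(b_1-1)))$, and substitute into $\dot y = 0$. After clearing denominators this yields a quadratic
\[
k(b_1-1) l_Y\, y^2 \;-\; B\, y \;+\; l_X k_Y \;=\; 0,
\qquad
B := l_X l_Y + k(b_1-1) k_Y - k(b_2-1) k_X.
\]
A direct evaluation at the right endpoint gives $l_X k_X(b_2-1)/(b_1-1) > 0$, so every positive real root of the quadratic automatically lies in the admissible interval and produces a genuine positive steady state; positive steady states of $M$ are thus in bijection with positive real roots of this quadratic.

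Next I would apply Vieta and discriminant analysis. Since the leading and constant coefficients are strictly positive, the product of the roots is positive and so real roots share a common sign; two distinct positive real roots therefore exist iff $B > 0$ and $B^2 > 4\,k(b_1-1) l_Y \cdot l_X k_Y$. Recognising the factorisation $B = l_Y\bigl[\,l_X - (k/l_Y)(k_X(b_2-1) - k_Y(b_1-1))\,\bigr]$ identifies this discriminant condition with the squared-linear form in~\eqref{cond:atom2} after a routine rearrangement. Equality in~\eqref{cond:atom2} gives a double positive root and hence one doubly degenerate positive steady state, and reverse inequality gives no real roots and hence no positive steady state, recovering the three-way dichotomy stated in the lemma. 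For nondegeneracy in the strict inequality case, I would verify that $\det Df(x^*,y^*)$ equals, up to a strictly positive factor in $(x^*, y^*)$, the derivative of the reduced quadratic at the corresponding root $y^*$, so that simple roots of the quadratic correspond to nondegenerate steady states.

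The main obstacle I anticipate is the algebraic bookkeeping: rearranging $B^2 - 4AC$ into the exact squared-linear form of~\eqref{cond:atom2} requires care about which variable is eliminated (eliminating $x$ instead gives a cosmetically different but equivalent discriminant) and about the sign convention inside the squared expression. A secondary subtlety is the sign condition $B > 0$, which is needed in addition to the positive discriminant so that the two real roots are positive rather than negative; I would expect this to be implicit in the way the squared form of~\eqref{cond:atom2} is normalised under the standing hypothesis $b_1, b_2 > 1$, and would verify it by hand.
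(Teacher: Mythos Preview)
Your approach coincides with the paper's: reduce the two-equation steady-state system to a single quadratic and count roots via the discriminant. The paper takes the linear combination $(b_2-1)f_1-(b_1-1)f_2=0$ to express $y$ affinely in $x$, then substitutes back into $f_1=0$ to obtain a quadratic $g(x)$; you solve $f_1=0$ rationally for $x$ and substitute into $f_2=0$ to obtain a quadratic in $y$. The two quadratics have the same discriminant, so the inequalities are equivalent---but be aware this is a genuine algebraic identity rather than a cosmetic rewrite: your $B^2-4AC$ carries $k_Y$ in the product term and the opposite sign inside the bracket compared with the squared expression in~\eqref{cond:atom2}, so matching them requires an honest expansion of both sides.

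There is one real gap. From $q(0)>0$ and $q(y_0)>0$ with the parabola opening upward you infer that every positive root of $q$ lies in $(0,y_0)$. That does not follow: both roots may lie to the right of $y_0$, in which case $q>0$ throughout $[0,y_0]$ and there are no positive steady states even though $q$ has two positive roots. Concretely, with $b_1=b_2=2$, $k=l_X=l_Y=k_X=1$, $k_Y=100$ one has $B=100>0$ and discriminant $9600>0$, yet both $y$-roots exceed $y_0=1$ and the corresponding $x$-values are negative. So beyond $B>0$ and positive discriminant you also need the vertex $B/(2k(b_1-1)l_Y)$ to lie in $(0,y_0)$, i.e.\ $0<B<2l_Xl_Y$. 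Your closing remark anticipates trouble here, and rightly so: the extra sign/location constraint is \emph{not} forced by $b_1,b_2>1$ alone. The paper's own proof is equally terse on this point; for the way the lemma is actually used later (only that $M$ admits nondegenerate MSS for \emph{some} parameters) it suffices to exhibit one explicit parameter choice giving two positive roots, which the paper does and which sidesteps the delicate ``if and only if''.
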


\begin{proof}
Let $x$ and $y$ represent the concentrations of the species $X$ and $Y$ respectively. The network \eqref{atom2} when endowed with mass-action kinetics results in the following dynamical system. 
\begin{align} \label{mideq0}
\left( \begin{array}{c} \dot x \\ \dot y \end{array} \right) = \left( \begin{array}{c} f_1(x,y) \\ f_2(x,y) \end{array} \right) = \left( \begin{array}{c} k_X - l_X x + k(b_1-1)x y \\ k_Y - l_Y y + k(b_2-1)x y \end{array} \right)
\end{align}
Some straightforward calculation reveals that the zeros of  \\
$f(x,y):=  (f_1(x,y) , f_2(x,y))$ coincide with the zeros of the following system
\begin{align} \label{mideq1}
y &= \frac{k_Y}{l_Y} - \frac{1}{l_Y} \left(\frac{b_2-1}{b_1-1}\right) (k_X - l_X x) \nonumber \\
g(x):=k_X& - \left(l_X + \frac{k}{l_Y} (k_X (b_2-1) - k_Y (b_1-1)) \right) x + k(b_1-1) \frac{l_X}{l_Y} x^2 = 0 
\end{align}
So that $N$ has two distinct positive steady states if and only if the second equation in \eqref{mideq1} has two distinct positive roots which occurs when $g(x^*)<0$ where $x^*$ is the minimum of $g(x)$.  The inequality $g(x^*) <0$ is easily shown to be equivalent to \eqref{cond:atom2}. It only remains to show that the set of positive parameters satisfying the inequality \eqref{cond:atom2} is non-empty. To this end, let $k_Y := k_X \left( \frac{b_2-1}{b_1-1}\right)$, $k := \frac{l_Y}{2(b_1-1) l_X}$, and $l_X = k_X+1$. With these choices, the left side of \eqref{cond:atom2} is $\frac{(k_X+1)^2}{2k_X}$ which is greater than $1$ for all positive $k_X$. The nondegeneracy of the two steady states is clear. When $g(x^*)=0$, $x^*$ is a steady state with degeneracy 2, since $g'(x^*)=0$. 
\end{proof}

\begin{example}
For the network in \eqref{atom2}, let 
\bd 
H=\frac{l_Y}{4k(b_1-1)l_X k_X} \left(l_X + \frac{k}{l_Y} (k_X(b_2-1) - k_Y (b_1-1)) \right)^2.
\ed
 Let $b_1=b_2=2$, $k_X=k_Y$ and $\frac{l_Y}{k} = 2 l_X$, so that $H = \frac{l_X^2}{2k_X}$ and using the definition in \eqref{mideq1}, $g(x) = k_X - l_X x + \frac{1}{2} x^2$. We consider three cases:
\been
\item $l_X = k_X=1$ results in the network  
\begin{align*} 
 	0  \stackrel[1]{1}{\rightleftarrows}  X \qquad 0  \stackrel[2k]{1}{\rightleftarrows}  Y \qquad  X + Y \stackrel[]{k}{\ra} 2 X + 2 Y
\end{align*}
so that $g(x) = 1- x + \frac{1}{2}x^2$, which has no positive roots and thus the network has no positive steady states.
\item $l_X = k_X=2$ results in the network  
\begin{align*} 
 	0  \stackrel[2]{2}{\rightleftarrows}  X \qquad 0  \stackrel[4k]{2}{\rightleftarrows}  Y \qquad  X + Y \stackrel[]{k}{\ra} 2 X + 2 Y
\end{align*}
and $g(x) = 2-2x+\frac{1}{2}x^2$, which has the doubly degenerate root $x=2$. A simple calculation shows that $(x^*,y^*) = (2, 16k)$ is the unique steady state of the network with degeneracy 2.
\item $l_X =2,  k_X=1$ results in the network  
\begin{align*} 
 	0  \stackrel[2]{1}{\rightleftarrows}  X \qquad 0  \stackrel[4k]{1}{\rightleftarrows}  Y \qquad  X + Y \stackrel[]{k}{\ra} 2 X + 2 Y
\end{align*}
and $g(x) = 1-2x+\frac{1}{2}x^2$ which has the two positive roots $2 \pm \sqrt{2}$ resulting in the distinct nondegenerate steady states of the network, $(2-\sqrt{2}, 8k(2-\sqrt{2}))$ and $(2+\sqrt{2}, 8k(2+\sqrt{2}))$. 

\enen
\end{example}

\begin{remark}
The reaction networks studied in the Lemmas \ref{lemma:atom1} and \ref{lemma:atom2} are `one-reaction atoms of multistationarity' (see Definition \ref{def:atom}). The lemmas establish that these one-reaction atoms admit nondegenerate MSS and that the one-reaction atoms admit positive rate parameters for which there are two positive steady states. By a theorem of Joshi and Shiu (Lemma \ref{lemma:embed}), it follows that any fully open network which contains one of these atoms of multistationarity as an embedded network admits at least two positive nondegenerate mass-action steady states. 
\end{remark}

\begin{lemma} \label{lemma:ineqsystem}
Let $\{ a_1,a_2,\ldots,a_s,b_1,b_2,\ldots,b_s \}$ be a set of nonnegative integers. Consider the following system of inequalities:  
\begin{align} 
\sum_{i=1}^s a_i \mu_i &>  \max_{1 \le j \le s} \mu_j >0 \label{ineq1}\\
\text{sign} (\mu_i) & = \text{sign} (b_i - a_i) \quad \quad (1 \le i \le s). \label{ineq2}
\end{align}
This system has a solution $ \mu^* \in \R^{s} \setminus \{ 0 \}$ if and only if $\sum_{i: b_i > a_i} a_i >1$.
\end{lemma}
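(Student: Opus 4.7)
My approach is to reduce the system to a purely combinatorial inequality on the positive-support part of $\mu$ by exploiting the sign compatibility condition \eqref{ineq2}. Set $I_+ = \{i : b_i > a_i\}$, $I_- = \{i : b_i < a_i\}$, and $I_0 = \{i : b_i = a_i\}$. Condition \eqref{ineq2} is equivalent to requiring $\mu_i > 0$ on $I_+$, $\mu_i < 0$ on $I_-$, and $\mu_i = 0$ on $I_0$. With this reformulation, the quantity $\max_j \mu_j$ equals $\max_{i \in I_+} \mu_i$ (so the condition $\max_j \mu_j > 0$ forces $I_+ \ne \emptyset$), while each term $a_i \mu_i$ with $i \in I_- \cup I_0$ is nonpositive and therefore can only decrease $\sum_i a_i \mu_i$.

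For the necessity direction, I would assume a solution $\mu^*$ exists, set $M := \max_j \mu_j^* > 0$, and chain the estimates
$$\sum_{i=1}^s a_i \mu_i^* \;\le\; \sum_{i \in I_+} a_i \mu_i^* \;\le\; M \sum_{i \in I_+} a_i.$$
The hypothesis $\sum_i a_i \mu_i^* > M$ then immediately yields $\sum_{i \in I_+} a_i > 1$, which is exactly the condition $\sum_{i:b_i>a_i} a_i > 1$ to be proved.

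For the sufficiency direction, I would construct a solution explicitly. Assuming $\sum_{i \in I_+} a_i > 1$, and using the fact that the $a_i$ are nonnegative integers to upgrade this to $\sum_{i \in I_+} a_i \ge 2$, set $\mu_i^* = 1$ for $i \in I_+$, $\mu_i^* = -\epsilon$ for $i \in I_-$, and $\mu_i^* = 0$ for $i \in I_0$, where $\epsilon > 0$ is chosen small enough that $\epsilon \sum_{i \in I_-} a_i < 1$. Then $\max_j \mu_j^* = 1 > 0$, the sign conditions in \eqref{ineq2} are immediate, and
$$\sum_{i=1}^s a_i \mu_i^* \;=\; \sum_{i \in I_+} a_i \;-\; \epsilon \sum_{i \in I_-} a_i \;>\; 1 \;=\; \max_j \mu_j^*,$$
so \eqref{ineq1} holds as well.

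I do not foresee a serious obstacle here; the argument is essentially bookkeeping on the sign pattern that \eqref{ineq2} forces on $\mu$. The one mildly subtle point worth flagging in the writeup is the use of integrality of the $a_i$ to pass from the strict inequality $\sum_{i\in I_+} a_i > 1$ to $\sum_{i\in I_+} a_i \ge 2$, which is what makes the explicit construction in the sufficiency direction succeed for all sufficiently small $\epsilon$.
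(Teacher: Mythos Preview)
Your proposal is correct and follows essentially the same approach as the paper: the same partition into $I_+,I_-,I_0$ via the sign condition, the same explicit construction $\mu_i\in\{1,-\epsilon,0\}$ for sufficiency, and the same reduction of the necessity direction to bounding $\sum_{i\in I_+} a_i\mu_i^*$ against $\max_j\mu_j^*$. Your necessity step is in fact slightly more streamlined than the paper's (you bound $\mu_i^*\le M$ directly and divide, whereas the paper argues by contradiction via an ``at most one index'' case analysis), but the two arguments are otherwise the same.
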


\begin{proof}
We first assume that the inequality system has a nonzero solution denoted by $ \mu^*$. So the set $\{j |\mu_j>0 \} = \{j |b_j>a_j \}$ is nonempty and 
\begin{align*}
   \sum_{i:b_i>a_i} a_i \mu^*_i & \ge 
     \sum_{i:b_i>a_i} a_i \mu^*_i +  \sum_{i:b_i<a_i} a_i  \mu^*_i  + \sum_{i:b_i=a_i} a_i \mu^*_i \nonumber \\
   &= \sum_{i=1}^s  a_i \mu^*_i > \max_{j: 1 \le j \le s} \mu^*_j = \max_{j: b_j > a_j} \mu^*_j > 0. 
\end{align*}
where in the last line we used \eqref{ineq1} twice. This shows that 
\begin{align}\label{ineq3}
 \sum_{i:b_i>a_i} a_i \mu^*_i > \max_{j: b_j > a_j} \mu^*_j 
 \end{align}
If $\sum_{i:b_i>a_i} a_i \le 1$, then there exists at most one $\widetilde i$ such that $b_{\widetilde i}>a_{\widetilde i}  > 0$.   So that $\sum_{i:b_i>a_i} a_i \mu^*_i \le \mu^*_{\widetilde i}$ which contradicts  \eqref{ineq3}. So we must have $\sum_{i:b_i>a_i} a_i > 1$. 

Conversely, assume that $\sum_{i:b_i>a_i} a_i > 1$. For all $i$ such that $b_i>a_i$, choose $\mu_i =1$, for all $k$ such that $b_k=a_k$, choose $\mu_k=0$, and for all $j$ such that $b_j < a_j$, choose $\mu_j = -\epsilon$ where $\epsilon>0$. This choice clearly satisfies \eqref{ineq2} and 
\begin{align*}
\sum_{i=1}^s a_i \mu_i &= \sum_{i:b_i>a_i} a_i \mu_i + \sum_{i:b_i<a_i} a_i \mu_i \\
&= \sum_{i:b_i>a_i} a_i - \epsilon \sum_{i:b_i<a_i} a_i \ge 2  - \epsilon \sum_{i:b_i<a_i} a_i > 1 = \max_{1 \le j \le s} \mu_j > 0
\end{align*}
where the last inequality follows by choosing $\epsilon$ sufficiently small. 
\end{proof}

%
A steady state $x_0$ of a system of ODEs, $\dot x = f(x)$ for $x \in \mathbb R^n$,  (or a steady state of a network which generates mass-action ODEs) is said to be {\em exponentially stable} if there is a neighborhood $V$ of $x_0$ and a positive constant $a$ such that $\abs{x(t)-x_0} < e^{-at}$ as $t \to \infty$ for all $x_0$ in $V$. 

If a fully open network $N$ is an embedded network of a fully open network $G$, then we can extend the steady states of $N$ to $G$ using the following result. 
\begin{lemma} \label{lemma:embed} [Theorem 4.2 and Corollary 4.6 of Joshi and Shiu \cite{joshi2012atoms}] Let $N$ be a fully open network embedded in a fully open network $G$. 
\beit
\item If $N$ admits nondegenerate MSS, then so does $G$. Moreover, if $N$ admits finitely many such steady states, then $G$ admits at least as many.
\item If $N$ admits multiple positive exponentially stable nondegenerate steady states then so does $G$. Moreover, if $N$ admits finitely many such steady states, then $G$ admits at least as many. 
\enit
\end{lemma}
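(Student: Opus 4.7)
My plan is to decompose the notion of embedding into two elementary operations and establish the lifting property for each separately: (i) passing from $G$ to a subnetwork $G'$ by removing a set of reactions, and (ii) passing from $G'$ to the restriction to a species subset $S \subset \SSS_{G'}$. Since the general embedded network $\{S, \CC|_{R|_S}, R|_S\}$ is produced by first restricting the reactions to $R$ and then restricting the species to $S$, composing the two liftings recovers the lemma.

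For case (i), where $N$ is obtained by removing a set of reactions $\RR_0$, I would introduce a parameter $\epsilon > 0$ and assign to each reaction in $\RR_0$ a rate constant of the form $\epsilon\,\tilde\kappa_\rho$ while keeping $N$'s rate constants on the retained reactions. The resulting vector field $f_G^\epsilon$ on $\R^{|\SSS_G|}_{>0}$ satisfies $f_G^0 = f_N$. Because $G$ and $N$ are fully open, the stoichiometric subspaces are full-dimensional, so a nondegenerate positive steady state $x^*$ of $N$ has an invertible Jacobian; the implicit function theorem then produces a smooth branch $x^*(\epsilon)$ of nondegenerate steady states of $f_G^\epsilon$. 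Distinct steady states of $N$ continue to distinct steady states of $G$ for all small $\epsilon > 0$, proving the first bullet. Exponential stability is an open condition on the Jacobian spectrum and is preserved under small smooth perturbations, so the second bullet also follows in this case.

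For case (ii), where $N$ is the restriction of $G'$ to species $S$ and $Y := \SSS_{G'} \setminus S$ is the removed species set, the key mechanism is that $G'$ contains the inflow/outflow reactions $0 \rightleftarrows Y_j$ for every $Y_j \in Y$. I would target a lifted steady state of the form $(x^*, \mathbf{1})$ in $G'$, choosing the outflow rate of each $Y_j$ equal to a large parameter $M$ and adjusting the inflow rate of $Y_j$ so that $\dot y_j = 0$ at $(x^*, \mathbf{1})$. When $y = \mathbf{1}$, each mass-action monomial $\tilde\kappa_\rho\, x^{(\rho)_S} y^{(\rho)_Y}$ of $G'$ collapses to $\tilde\kappa_\rho\, x^{(\rho)_S}$, which is exactly the rate of the corresponding restricted reaction in $R|_S$. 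The rate constants of $N$ are recovered by summing $\tilde\kappa_\rho$ over all reactions of $G'$ sharing the same $S$-restriction, so the $S$-components of the mass-action ODE of $G'$ at $y = \mathbf{1}$ coincide with $f_N$. Hence every steady state $x^*$ of $N$ lifts to $(x^*, \mathbf{1})$ in $G'$.

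The main technical obstacle is establishing that the lifted steady state inherits nondegeneracy --- and, for the second bullet, exponential stability. I would decompose the Jacobian $\Jac_{G'}(x^*, \mathbf{1})$ as a block matrix $\bigl(\begin{smallmatrix} A & B \\ C & D \end{smallmatrix}\bigr)$, with blocks corresponding to the $x$ and $y$ partitions. Because the outflow reactions $Y_j \to 0$ contribute $-M$ to the diagonal of $D$ while all other contributions to $A, B, C, D$ remain bounded as $M$ grows, $D$ is dominated by $-M\,I$. A Schur complement argument then shows that $A - B D^{-1} C$ converges to $\Jac_N(x^*)$ as $M \to \infty$, so invertibility of $\Jac_N(x^*)$ lifts to invertibility of $\Jac_{G'}(x^*, \mathbf{1})$ for sufficiently large $M$. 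When $\Jac_N(x^*)$ is Hurwitz, the eigenvalue splitting inherent in singular perturbation --- the $|Y|$ eigenvalues near $-M$ and the $|S|$ eigenvalues near the spectrum of the Schur complement --- forces $\Jac_{G'}(x^*, \mathbf{1})$ to be Hurwitz as well, yielding exponential stability. Injectivity of the map $x^* \mapsto (x^*, \mathbf{1})$ preserves the count of steady states, and composing cases (i) and (ii) completes the proof of both bullets.
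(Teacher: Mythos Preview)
The paper does not prove this lemma; it is quoted verbatim from Joshi and Shiu (Theorem~4.2 and Corollary~4.6 of \cite{joshi2012atoms}) and used as a black box, so there is no in-paper proof to compare against. Your two-step decomposition (remove reactions, then remove species) and the implicit-function-theorem perturbation for case~(i) are the standard route and are fine.

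There is, however, a genuine gap in your case~(ii). You lift a steady state $x^*$ of $N$ to the point $(x^*,\mathbf 1)$ in $G'$ by ``adjusting the inflow rate of $Y_j$ so that $\dot y_j=0$ at $(x^*,\mathbf 1)$.'' That inflow rate is $k_{Y_j}=M-g_j(x^*,\mathbf 1)$, where $g_j$ collects the non-flow contributions, and this depends on $x^*$. If $N$ has two distinct nondegenerate steady states $x^*_1\neq x^*_2$ with $g_j(x^*_1,\mathbf 1)\neq g_j(x^*_2,\mathbf 1)$ for some $j$, then no single choice of rate constants in $G'$ makes both $(x^*_1,\mathbf 1)$ and $(x^*_2,\mathbf 1)$ exact steady states. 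Your concluding sentence, ``injectivity of the map $x^*\mapsto(x^*,\mathbf 1)$ preserves the count of steady states,'' therefore does not follow: with your parameter choice only one of the $x^*_i$ lifts.

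The fix is to make case~(ii) perturbative as well, parallel to case~(i). Take both the inflow and the outflow rate of every $Y_j$ equal to $M$, and set $\epsilon:=1/M$. The rescaled steady-state system $\bigl(F(x,y),\,(\mathbf 1-y)+\epsilon\,g(x,y)\bigr)=0$ reduces at $\epsilon=0$ to $y=\mathbf 1$ and $F(x,\mathbf 1)=f_N(x)=0$, and its Jacobian at $(x^*,\mathbf 1,0)$ is block upper-triangular with diagonal blocks $\Jac_N(x^*)$ and $-I$. The implicit function theorem then produces, for a single $x^*$-independent parameter choice, a distinct nearby steady state of $G'$ for each nondegenerate steady state of $N$. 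Your Schur-complement and singular-perturbation analysis of the full Jacobian then applies verbatim to give nondegeneracy and, in the Hurwitz case, exponential stability.
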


The following lemma deals with some simple cases of one-reaction fully open networks  appearing in the statement of Theorem \ref{thm:1rxn}, which can be handled by using either Deficiency Zero Theorem or Deficiency One Theorem \cite{FeinDefZeroOne}. 

\begin{lemma} \label{lemma:defzeroone}
Consider the following one-reaction fully open network $N$:
\begin{align*}
 0 \rla {X_i} & \quad (1 \le i \le s)\\
 y_a &\rla  y_b 
\end{align*}
where $ y_b \ra  y_a$ may have a zero reaction rate constant (in other words, the non-flow reaction is possibly irreversible). If $\{ y_a,  y_b \} \cap \{0,  X_1,\ldots, X_s \} \ne \emptyset$ then $N$ does not admit MSS. 

\end{lemma}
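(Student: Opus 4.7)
The plan is to do case analysis on how many of the two non-flow complexes $y_a, y_b$ lie in the ``simple set'' $\{0, X_1, \ldots, X_s\}$ and then invoke either the Deficiency Zero Theorem or the Deficiency One Theorem in each case. Throughout: the fully open hypothesis gives stoichiometric subspace $\mathbb S = \R^s$, so $d = s$; and since every complex is either in the simple set (hence linked to $0$ by a flow reaction) or is $y_a$ or $y_b$ (each of which is connected by the non-flow reaction to a complex in the simple set, using the hypothesis $\{y_a, y_b\} \cap \{0, X_1, \ldots, X_s\} \ne \emptyset$), there is a single linkage class, $l = 1$. Hence $\delta = n - 1 - s$, and I split on whether $n = s+1$ or $n = s+2$.

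\textbf{Case A} (both $y_a, y_b$ in the simple set). Since reactions of the form $0 \ra X_i$ or $X_i \ra 0$ are flow reactions and $y_a = y_b$ is excluded, the only remaining possibility is $y_a = X_i$, $y_b = X_j$ with $i \ne j$, reversible or not. Then $n = s+1$ and $\delta = 0$. The cluster $\{0, X_1, \ldots, X_s\}$ is already strongly connected through the reversible flow pairs $0 \rla X_k$, so the entire linkage class is a single terminal strong linkage class. Theorem \ref{thm:defzero} immediately rules out MSS.

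\textbf{Case B} (exactly one of $y_a, y_b$ in the simple set). Here $n = s+2$ and $\delta = 1$. The goal is to verify the hypotheses of Theorem \ref{thm:defone}, namely that the sole linkage class contains exactly one terminal strong linkage class; since $\delta_1 = \delta = 1$, the numerical conditions $\delta_j \le 1$ and $\sum_j \delta_j = \delta$ are then automatic. In the reversible sub-case, the non-flow pair $y_a \rla y_b$ strongly connects $y_a$ and $y_b$ to each other and, through whichever of them sits in the simple set, to the flow-connected cluster $\{0, X_1, \ldots, X_s\}$; the whole complex set becomes one terminal strong linkage class. In the irreversible sub-case, the outside complex forms its own singleton strong linkage class, and irreversibility forces the only edge between it and the flow-connected cluster to go in one direction, so exactly one of the two strong linkage classes is terminal: if the outside complex is the product $y_b$, the singleton $\{y_b\}$ is terminal while the cluster is not; if it is the reactant $y_a$, the cluster is terminal while $\{y_a\}$ is not.

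The only genuine bookkeeping point is the irreversible sub-case of Case B, where one might momentarily fear having two terminal strong linkage classes and thereby losing access to Theorem \ref{thm:defone}. The observation that dispels this concern is that an irreversible edge between two strong linkage classes always makes its source non-terminal, so precisely one of the two strong linkage classes is terminal. Once this structural accounting is in hand, Theorems \ref{thm:defzero} and \ref{thm:defone} dispose of all cases mechanically.
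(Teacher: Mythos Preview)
Your proof is correct and follows essentially the same approach as the paper: split on whether both or exactly one of $y_a,y_b$ lie in $\{0,X_1,\ldots,X_s\}$, compute $n$ and hence $\delta$, verify the single-terminal-strong-linkage-class structure, and invoke the Deficiency Zero or Deficiency One Theorem accordingly. Your accounting of the irreversible sub-case in Case~B (identifying which of the two strong linkage classes is terminal depending on whether the outside complex is the reactant or the product) matches the paper's exactly.
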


\begin{proof}
We will show that if $\{ y_a,  y_b \} \cap \{0,  X_1,\ldots, X_s \} \ne \emptyset$ holds then the network has a deficiency of either zero or one, so that either by applying Deficiency Zero or Deficiency One Theorem, multiple mass-action steady states can be ruled out. 
Since we are assuming that $ y_a \ra  y_b$ is a non-flow reaction, it is not the case that one of the complexes $\{ y_a,  y_b \}$ is the $0$ complex and the other is a unimolecular complex. So suppose that one of the complexes (either $ y_a$ or $ y_b$ but not both) in the non-flow reaction is either the $0$ complex or is unimolecular. Then the network has only one linkage class ($l=1$) and $n=s+2$ complexes so that the deficiency of the network is $\de = n-l-d =(s+2)-(1)-(s) = 1$. If the reaction $ y_b \ra  y_a$ has a positive rate constant, then the network is reversible and the entire unique linkage class is a terminal strong linkage class. Otherwise, the unique linkage class has exactly one terminal strong linkage class, either $ y_b$ or $\CC \setminus  y_a$, depending on whether $ y_a \in  \{0,  X_1,\ldots, X_s \}$ or $ y_b \in  \{0,  X_1,\ldots, X_s \}$, respectively. In either case, the hypotheses of the Deficiency One Theorem (Theorem \ref{thm:defone}) are satisfied which rules out multiple steady states for such a network.

On the other hand if both $ y_a$ and $ y_b$ are unimolecular, then $N$ has one linkage class ($l=1$), and $n=s+1$ complexes. So the deficiency of $N$ is $\de = n-l-d = (s+1)-(1)-(s)=0$. Whether the rate constant for the reaction $ y_b \ra  y_a$ is zero or positive, the entire unique linkage class is a terminal strong linkage class. So by the Deficiency Zero Theorem (Theorem \ref{thm:defzero}), $N$ does not admit MSS. 
\end{proof}

Now we are ready to prove the one-reaction fully open network theorem. 
\begin{proof}[Proof of Theorem \ref{thm:1rxn}]  We define $y_a := a_1X_1 + a_2X_2 + \ldots + a_sX_s$ and $y_b := b_1X_1 + b_2X_2 + \ldots + b_sX_s$. The network under study is the following fully open network:
\begin{align*}
 & 0 \rla {X_i}  \quad (1 \le i \le s)\\
 & y_a \rla  y_b 
\end{align*}
where $ y_b \ra  y_a$ may have a zero reaction rate constant. 

Assume first that $\{ y_a,  y_b \} \cap \{0,  X_1,\ldots, X_s \} \ne \emptyset$; in other words, at least one of the complexes $ y_a$ or $ y_b$ is unimolecular or is the zero complex, which implies that 
$\sum_{i:~b_i>a_i} a_i \le 1$ and $\sum_{i:~a_i>b_i} b_i \le 1$. Furthermore by Lemma \ref{lemma:defzeroone}, the one-reaction network does not admit MSS. 

So it suffices to assume that $\{ y_a,  y_b \} \cap \{0,  X_1,\ldots, X_s \} = \emptyset$. From hereon, we will assume it to be the case that each of $ y_a$ and $ y_b$ is at least bimolecular. For the general one-reaction fully open network, 
\been[a.]
\item The set of species is $\SSS:=\{X_i | 1 \le i \le s\}$. 
\item The set of complexes is $\CC =   \{X_i | 1 \le i \le s\} \cup \{ 0\} \cup \{\sum_{i=1}^s  a_i X_i, \sum_{i=1}^s b_i X_i\}$. We relabel the complexes $ y_i := X_i$ for $1 \le i \le s$, $ y_{s+1} :=  0$,  $ y_{s+2} :=  y_a$ and $ y_{s+3} :=  y_b  $. 
\item The set of reactions is either $\RR_{irrev} = \{0 \rla X_i | 1 \le i \le s\} \cup \{ y_a \ra  y_b\}$ or $\RR_{rev} = \{0 \rla X_i | 1 \le i \le s\} \cup \{ y_a \ra  y_b\} \cup \{ y_b \ra  y_a\}$ depending on whether the non-flow reaction is irreversible or reversible. The two linkage classes of the network partition the set of complexes into $\CC_1:= \{X_i | 1 \le i \le s\} \cup \{ 0\} $ and $\CC_2 := \{\sum_{i=1}^s  a_i X_i, \sum_{i=1}^s b_i X_i\}$. 
\enen
The proof will proceed by application of the \doalgnospace. In 1 through 4 in the following, we lay the groundwork for application of the Deficiency One Algorithm by checking the conditions of validity of the algorithm. In 5, we apply the \doalg to the one-reaction fully open network  containing an irreversible non-flow reaction. In 6, we apply the \doalg to the one-reaction fully open network containing a reversible non-flow reaction, and finally in 7, we establish the nondegeneracy of the steady states for the multistationary fully open networks. 
\been
\item {\bf Dimension of the stoichiometric subspace.} For a fully open network, the stoichiometric subspace has `full' dimension. For $s$ species, the dimension of the stoichiometric subspace is $d=s$. 
\item {\bf Deficiency of the network.} 
Since we are assuming that neither of the two complexes in the non-flow reaction is the $0$ complex or is unimolecular, $\{ y_a,  y_b \} \cap  \{0,  X_1,\ldots, X_s \}  = \emptyset$. The number of complexes is $n=s+3$, the number of linkage classes is $l=2$ and the total deficiency is $\de = n-l-d = (s+3)-(2)-(s) = 1$. On the other, the deficiency of the two linkage classes is given by 
\been 
\item $\{0 \rla X_i | 1 \le i \le s\}$: $\de_1 = n_1-1-d_1 = (s+1)-(1)-(s)=0$. 
\item For $\{ y_a \rla  y_b\}$: $\de_2 = n_2-1-d_2 = (2)-(1)-(1)=0$. 
\enen
Such networks satisfy the hypotheses of the Deficiency One Algorithm \cite{FeinbergMSSdefone} provided they are also regular. In the following we check the regularity of the network. 

\item {\bf Regularity of the network.} 
\been 
\item {\em Regularity Condition 1:} The inflow reaction $0 \to X_j$ results in the canonical basis reaction vector $X_j$ whose $j$-th component is $1$ and all other components are $0$. On the other hand, the outflow reaction $X_j \to 0$ results in the reaction vector $- X_j$. The reaction vectors for the non-flow reactions are $v := (b_1-a_1, b_2-a_2, \ldots, b_2-a_s)$ and $(a_1-b_1,a_2-b_2, \ldots, a_s - b_s) = -v$. We will define a set of positive numbers $\{ \alpha_{ y \ra  y'}|  y \ra  y' \in \RR\}$ which satisfies $\sum_\RR \alpha_{ y \ra  y'} ( y' -  y) = 0$. 
\been[{Case} 1.]
\item  In the case where the non-flow reaction is reversible, we let $\alpha_{0 \to X_j}  = \alpha_{X_j \to 0} =1$ for all $j$ and we let $\alpha_{y_a \to y_b} = \alpha_{y_b \to y_a}= 1$. So, $\sum_\RR \alpha_{ y_i \ra  y_j} ( y_j -  y_i) =  \sum_j \left(X_j - X_j\right) + v - v = 0$, which shows that the reaction vectors are positively dependent. 
\item When the non-flow reaction is irreversible, we let $\alpha_{y_a \to y_b} =1$, and for all $j$ we let $\alpha_{0 \to X_j} = a_j +1 > 0 $ and $\alpha_{X_j \to 0} = b_j +1 > 0$. $\sum_\RR \alpha_{ y_i \ra  y_j} ( y_j -  y_i) = \sum_j \left( (a_j+1) X_j - (b_j+1) X_j  \right. \\ \left. + b_j X_j - a_j X_j \right)=0$.  
\enen
\item {\em Regularity Condition 2:} The entire linkage class $\{0,  X_1,\ldots, X_s \}$ is a terminal strong linkage class. Either the linkage class $\{ y_a,  y_b \}$ is a terminal strong linkage class or contains the terminal strong linkage class $\{ y_b\}$ depending on whether the non-flow reaction is reversible or not. 
\item {\em Regularity Condition 3:} If the inflow and outflow reactions of some species $j$ are removed or if the non-flow reaction were to be removed, the linkage class containing that reaction will be disconnected into two linkage classes. 
\enen
This shows that the network under consideration satisfies the hypotheses of the Deficiency One Algorithm.

\item {\bf Sign compatibility with the stoichiometric subspace.} Since the network under consideration is a fully open network, the stoichiometric subspace has ``full" dimension. In other words, the stoichiometric subspace $\mathbb S = \R^{s}$. Every vector $ \mu^* = (\mu_1, \ldots, \mu_s)$ is contained in $\mathbb S$ and therefore is trivially sign compatible with $\mathbb S$. Thus it suffices to determine an inequality system for a partition obtained in Step 3 of the algorithm and then to determine if the inequality system has any solution. If there is such a solution for any partition, then the network admits multiple steady states. 

\item {\bf Application of the Deficiency One Algorithm to a network containing at least one irreversible reaction.}
Suppose first that the non-flow reaction is not reversible. In other words $( y_b \ra  y_a) \notin \RR$. The set of reaction vectors for the network is $\RR_{irrev} = \{X_i, ~-X_i | i \in \SSS \} \cup \{\sum_{i=1}^s (b_i -a_i) ~ X_i\}$, so that $r = \abs{\RR} = 2s+1$.
\been[{\bf Step 1.}]
\item Let
\begin{align}
g_i &:= a_i-b_i, \qquad (1 \le i \le s) \nonumber \\
g_{s+1}&:= \sum_{i=1}^s (b_i-a_i), \quad g_{s+2}:=-1, \quad g_{s+3}:=1. \label{gvalues}
\end{align}
 We now check that the $g_i$ satisfy the three required conditions. 
\been[(a)]
\item  
$\ds \sum_{i=1}^{n} g_i  y_i = \sum_{i=1}^s (a_i-b_i) X_i + \left(\sum_{i=1}^s (b_i-a_i)\right)  0 + (-1) \sum_{i=1}^s a_i X_i + (+1) \sum_{i=1}^s b_i X_i =  0. $
\item It is clear that the $g_i$ sum to zero for the two linkage classes $\CC_1= \{X_i | 1 \le i \le s\} \cup \{  0\}$ and  $\CC_2 = \{\sum_{i=1}^s a_i X_i, \sum_{i=1}^s b_i X_i \}$. 
\item $\CC_1$ is a terminal strong linkage class while $\{\sum_{i=1}^s b_i X_i\}$ is a terminal strong linkage class within $\CC_2$. The corresponding $g_i$ sum to $0$ and $1$, respectively. 
\enen

\item $\CC_2$ contains only one complex in its terminal strong linkage class, and thus there are no pairs of complexes to consider for $\CC_2$. In $\CC_1$, all adjacent pairs of complexes are of the type $\{ 0, X_i\}$ for some $i$.  For a fixed $k$, let $ y_p := \{ X_k\}$ and $ y_q :=  \{0\}$.  So the sum over $g_i$ in $ y_p$ is  $g_k = a_k-b_k$ and $ y_p \cdot  \mu -  y_q \cdot  \mu =  X_k \cdot  \mu - 0= \mu_k$. So that for all $k$ we get the following system of inequalities:
\begin{align} \label{musign}
\text{sign}(\mu_i) = \text{sign}(a_i - b_i) \qquad (1 \le i \le s)
\end{align}
\item The set of reactant complexes is $\CC \setminus \{ y_b\}$. 
\been [(a)]
\item $ y_a$ is the only complex that does not belong to a terminal strong linkage class and so $ y_a \in M$. 
\item $\CC_1$ is a terminal strong linkage class and so all the complexes in $\CC_1$ must be placed in the same subset. The three choices of where to place the complexes in $\CC_1$ lead to the following three partitions of the reactant complexes.

\been[(i)]
\item $U = \emptyset, ~~ M = \{ y_a\} \cup \{ 0\} \cup \{X_i | 1 \le i \le s \}, ~~ L = \emptyset$.
\item $U = \emptyset, ~~ M = \{ y_a\}, ~~ L = \{ 0\} \cup \{X_i | 1 \le i \le s \}$.
\item $U = \{ 0\} \cup \{X_i | 1 \le i \le s \}, ~~ M = \{ y_a\}, ~~ L = \emptyset$.
\enen
\enen
It is straightforward to show that if the inequality system resulting after step 7 from partition (ii) has a solution $\mu^*$, then the inequality system resulting after step 7 from partition (iii) has a solution which is equal to $- \mu^*$. This is true whenever two partitions are related by switching the contents of the sets $U$ and $L$ \cite{FeinbergMSSdefone}. Thus, since partition (iii) does not provide any new information, we will restrict attention to partitions (i) and (ii). 
\item 
For partition (i), we get $\mu_i=0 ~ (1 \le i \le s)$. Since we are looking for a nonzero solution $ \mu^* = (\mu_1^*, \ldots, \mu_s^*)$, we may discard partition (i) and it suffices to consider partition (ii) only. Beginning here and in all the following steps, we will assume that we are considering partition (ii) even when this is not explicitly stated. For partition (ii), there is only one complex in $M$ and so we do not get any equations from applying this step.
\item Since there is one complex in $M$ and $s+1$ complexes in $L$, we get the following system of $s+1$ inequalities 
\begin{align*}
\sum_{i=1}^s a_i \mu_i > 0, ~~ \sum_{i=1}^s a_i \mu_i > \mu_j \quad (1 \le j \le s)  
\end{align*}
\item For each adjacent pair of complexes in $L$, we write the inequality from Step 2 with the inequality reversed. 
\begin{align*}
\text{sign} (\mu_i) = \text{sign} (b_i - a_i) ~~~ (1 \le i \le s)
\end{align*}
\item We gather all the inequalities from Steps 4-6 to get the inequality system. 
\begin{align*} 
\sum_{i=1}^s a_i \mu_i > 0, ~~ \sum_{i=1}^s a_i \mu_i > \mu_j \quad (1 \le j \le s), \\
\text{sign} (\mu_i) = \text{sign} (b_i - a_i) \quad (1 \le i \le s). 
\end{align*}
The first inequality holds only if there exists a $j$ such that $\mu_j>0$. So the first $s+1$ inequalities are equivalent to  $\sum_{i=1}^s a_i \mu_i >  \max_{1 \le j \le s} \mu_j >0$. The system of inequalities to be solved may be written as:
\begin{align} \label{ineqsystem}
\sum_{i=1}^s a_i \mu_i &>  \max_{1 \le j \le s} \mu_j >0 \nonumber\\
\text{sign} (\mu_i) & = \text{sign} (b_i - a_i), \quad \quad (1 \le i \le s)
\end{align}
\item By Lemma \ref{lemma:ineqsystem} the system of inequalities \eqref{ineqsystem} has a solution if and only if $\sum_{i: b_i > a_i} a_i >1$.
\item We have already shown in Steps 3 and 4 that we only need to consider partition (ii) since the other two partitions (i) and (iii) do not provide any new information. 
\enen
This completes the proof of the theorem for networks that contain at least one irreversible reaction.

\item {\bf Application of the Deficiency One Algorithm to a reversible network.} Now suppose that the non-flow reaction is reversible which results in a reversible network, in other words a network containing only reversible reactions. The set of reaction vectors for the network is $\RR_{rev} = \{X_i, ~-X_i | i \in \SSS \} \cup \{\sum_{i=1}^s (b_i -a_i) X_i\} \cup \{\sum_{i=1}^s (a_i -b_i) X_i \}$. 

The three networks $\RR_{rev}=\{0 \rla X_i | 1 \le i \le s\} \cup \{ y_a \ra  y_b\} \cup \{ y_b \ra  y_a\}$, $\RR_{irrev}=\{0 \rla X_i | 1 \le i \le s\} \cup \{ y_a \ra  y_b\} $, and $\widetilde{\RR}_{irrev}=\{0 \rla X_i | 1 \le i \le s\} \cup \{ y_b \ra  y_a\}$ generate the same stoichiometric subspace. By Lemma \ref{lemma:embed}, if either $\RR_{irrev}$ or $\widetilde{\RR}_{irrev}$ admits MSS then so does $\RR_{rev}$. This shows that if either $\sum_{i:b_i>a_i} a_i >1$ or $\sum_{i:a_i>b_i} b_i >1$, then $\RR_{rev}$ admits multiple steady states. 

Assume now that $\sum_{i:b_i>a_i} a_i \le 1$ and $\sum_{i:a_i>b_i} b_i \le 1$. It only remains to show that the network does not permit multiple steady states. Note that for reversible networks we need to go through the steps of the algorithm twice -- once for the $g_i$ chosen according to Step 1, and then again for $g_i$ which are negative of those chosen according to Step 1. 
\been [{\bf Step 1.}]
\item For $1 \le i \le s+2$, we let $g_i$ be the same as in \eqref{gvalues}. Since the complexes and the linkage classes are the same as in the irreversible case, there is nothing to check in parts (a) and (b). For part (c), we note that for a reversible network every terminal strong linkage class is a linkage class and so the $g_i$ corresponding to complexes in all terminal strong linkage classes sum to zero. 
\item We obtain the same set of inequalities as in \eqref{musign} for the terminal strong linkage class $\CC_1$. For the linkage class $\CC_2 = \{y_a, y_b\}$, we note that $1=g_{s+3} > g_{s+2} =-1$, so we get the following system of inequalities:
\begin{align*}
&\text{sign}(\mu_i) = \text{sign}(a_i - b_i) ~, ~~~~ 1 \le i \le s \\
&\sum_{i=1}^s (b_i - a_i) \mu_i >0
\end{align*}
\item The only constraint is that all complexes in $\CC_1$ belong to the same subset and all complexes in $\CC_2$ belong to the same subset. Thus there are $9$ distinct partitions. However, Remark 4.1.G in Feinberg \cite{FeinbergMSSdefone} tells us that we only need to examine partitions for which both subsets $U$ and $L$ are nonempty, since the condition that all terminal strong linkage classes contain more than one complex is satisfied. Furthermore, as in the irreversible case, interchanging contents of the subsets $U$ and $L$ result in equivalent inequality systems in the sense that if one inequality system has a solution then so does the other. Thus we only need to consider the following partition
$U = \CC_2 = \{ \sum_{i=1}^s a_i X_i,  \sum_{i=1}^s b_i X_i\},\quad M = \emptyset,\quad L = \CC_1 = \{ 0\} \cup \{X_i | 1 \le i \le s \}$. 
\item Since $M = \emptyset$, no equations result from this step.
\item Comparing complexes in $U$ and $L$ we get the following system
\begin{align*}
\sum_{i=1}^s a_i \mu_i > 0, ~~ \sum_{i=1}^s a_i \mu_i > \mu_j  \quad(1 \le j \le s) \\
\sum_{i=1}^s b_i \mu_i > 0, ~~ \sum_{i=1}^s b_i \mu_i > \mu_j \quad(1 \le j \le s).  
\end{align*}
\item We write the inequality from Step 2 with the inequality corresponding to the complex in $L$ reversed.
\begin{align*}
&\text{sign}(\mu_i) = \text{sign}(b_i - a_i) ~, ~~~~ 1 \le i \le s \\
&\sum_{i=1}^s (b_i - a_i) \mu_i >0
\end{align*}
\item We gather all the inequalities from Steps 4 to 6.
\begin{align} \label{ineqsystemrev}
\sum_{i=1}^s b_i \mu_i &> \sum_{i=1}^s a_i \mu_i >  \max_{1 \le j \le s} \mu_j >0 \nonumber\\
\text{sign} (\mu_i) & = \text{sign} (b_i - a_i), \quad \quad (1 \le i \le s)
\end{align}
\item By Lemma \ref{lemma:ineqsystem} this system of inequalities has no nonzero solutions for $\sum_{i:b_i>a_i} a_i \le 1$. 
\enen
As mentioned earlier, for the reversible case, we need to carry out the algorithm again with signs of all $g_i$ reversed. This results in an interchange of the roles of $a_i$ and $b_i$ and so it is straightforward to see that we get the following system of inequalities in Step 7.
\begin{align} 
\sum_{i=1}^s a_i \mu_i &> \sum_{i=1}^s b_i \mu_i >  \max_{1 \le j \le s} \mu_j >0 \nonumber\\
\text{sign} (\mu_i) & = \text{sign} (a_i - b_i), \quad \quad (1 \le i \le s)
\end{align}
Since by hypothesis $\sum_{i:a_i>b_i} b_i \le 1$, it follows from Lemma \ref{lemma:ineqsystem}, that the above system of inequalities does not have a nonzero solution. This completes the proof of the theorem for the reversible case.
\item {\bf Nondegeneracy of the steady states for the multistationary fully open networks.} We have shown that every multistationary one-reaction fully open network has an embedded fully open network of the form satisfying the hypotheses of either Lemma \ref{lemma:atom1} or of Lemma \ref{lemma:atom2}. Since the fully open networks appearing in Lemma \ref{lemma:atom1} and Lemma \ref{lemma:atom2} admit MSS, by Lemma \ref{lemma:embed}, it follows that if one of the  one-reaction fully open networks admits MSS, then it admits nondegenerate MSS. 
\enen
This completes the proof of the theorem.
\end{proof}

\begin{example}[Example \ref{ex:schlosser} continued] Theorem \ref{thm:1rxn} immediately helps classify the fully open networks M1-M3 in Example \ref{ex:schlosser} by multistationarity. 
\been
\item For network M1, in the notation of the proof of Theorem \ref{thm:1rxn} we let $y_a:= A+B$ and $y_b := 2A$, and we find that $\sum_{i: b_i > a_i} a_i =1$ and $\sum_{i: a_i > b_i} b_i =0$. So by Theorem \ref{thm:1rxn}, M1 does not admit MSS.
\item For network M2, $y_a:= 2A+B$ and $y_b := 3A$ and so $\sum_{i: b_i > a_i} a_i =2$ which implies that M2 admits nondegenerate MSS.
\item For network M3, $y_a:= A+2B$ and $y_b := 3A$ and so $\sum_{i: b_i > a_i} a_i =1$ and $\sum_{i: a_i > b_i} b_i =0$, thus M3 does not admit MSS.
\enen
\end{example}

We recall the following definition of CFSTR atom of multistationarity of Joshi and Shiu \cite{joshi2012atoms}. 

\begin{definition} \label{def:atom}
\been
\item A fully open network is a {\em CFSTR atom of multistationarity} if it admits nondegenerate MSS and it is minimal with respect to the embedded network relation among all such fully open networks.
\item A {\em one-reaction atom of multistationarity} is a CFSTR atom of multistationarity containing one non-flow (irreversible or reversible) reaction. 
\item A fully open network $G$ is said to {\em possess a CFSTR atom of multistationarity} if there exists an embedded network $N$ of $G$ that is a CFSTR atom of multistationarity.
\enen
\end{definition}

As a corollary of Theorem \ref{thm:1rxn}, we get a complete classification of all one-reaction atoms of multistationarity. We state this result as a theorem. 

\begin{theorem} \label{thm:1rxnatoms}
	
\been 
\item A one-reaction fully open network is a CFSTR atom of multistationarity if and only if it consists of one irreversible non-flow reaction and that non-flow reaction has one of the following two forms:
	\begin{align} \label{eqn:1rxnatom}
	a_1 X ~\ra~ a_2 X~, \quad {\rm or} \quad
	X+ Y ~\ra~ b_1 X + b_2 Y~, 
	\end{align}
	where $a_2>a_1>1$, or, respectively, $b_1>1$ and $b_2 >1$.  
\item A one-reaction fully open network possesses a CFSTR atom of multistationarity as an embedded network if and only if it admits nondegenerate MSS.
\enen
\end{theorem}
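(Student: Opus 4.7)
The plan is to derive both parts from Theorem \ref{thm:1rxn} together with Lemmas \ref{lemma:atom1}, \ref{lemma:atom2}, \ref{lemma:defzeroone}, and \ref{lemma:embed}. For the backward direction of part (1), Lemmas \ref{lemma:atom1} and \ref{lemma:atom2} directly supply nondegenerate MSS for the two listed forms. Minimality is then straightforward: removing the non-flow reaction leaves only flow reactions (no MSS), and restricting either template to a proper subset of its species produces at most a unimolecular non-flow reaction, which fails to be multistationary by Lemma \ref{lemma:defzeroone}.

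The forward direction of part (1) is the substantive content. Assume the network is a one-reaction atom. First, I would rule out the reversible case: if the non-flow reaction $y_a \leftrightarrow y_b$ were reversible, deleting one arrow yields a strict fully open subnetwork, and by Theorem \ref{thm:1rxn} whichever of the two irreversible inequalities $\sum_{b_i>a_i} a_i > 1$ or $\sum_{a_i>b_i} b_i > 1$ actually holds keeps this subnetwork multistationary, contradicting minimality. So the reaction is irreversible $y_a \to y_b$ with $\sum_{b_i>a_i} a_i > 1$. Next I eliminate non-autocatalytic species: any $X_k$ with $a_k \geq b_k$ may be removed by restriction, and since such a species contributes nothing to $\sum_{i:b_i>a_i} a_i$, the arithmetic condition is preserved in the embedded network, contradicting minimality. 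Hence every species is strictly autocatalytic ($b_i > a_i$ for all $i$). Finally I split on reactant stoichiometries. If some $a_{i_0} \geq 2$, the restriction to $\{X_{i_0}\}$ is $a_{i_0} X_{i_0} \to b_{i_0} X_{i_0}$, an atom by Lemma \ref{lemma:atom1}, so minimality forces the network to consist of $X_{i_0}$ alone, giving the first form. Otherwise all $a_i \leq 1$, and $\sum a_i \geq 2$ forces at least two indices with $a_i = 1$; restricting to any two such indices produces an atom of the second form by Lemma \ref{lemma:atom2}, and minimality forces the network to consist of exactly those two species.

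Part (2) is then a quick consequence. The forward direction follows from Lemma \ref{lemma:embed}: an embedded atom admits nondegenerate MSS, hence so does the ambient network. The backward direction reuses the extraction step from the forward direction of part (1): if the one-reaction network admits MSS, then by Theorem \ref{thm:1rxn} the arithmetic condition holds, and that condition directly singles out either one autocatalytic species with $a_{i_0} \geq 2$ or two autocatalytic species with $a_i = a_j = 1$ and $b_i, b_j \geq 2$; restricting to this subset yields an embedded CFSTR atom of multistationarity.

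The main obstacle is the minimality analysis in the forward direction of part (1). Each reduction step must produce a genuine proper embedded fully open network while preserving the arithmetic condition of Theorem \ref{thm:1rxn}. The enabling observation is that restricting to a species subset simply zeros out the corresponding components of $y_a$ and $y_b$, so the sum $\sum_{i \in A} a_i$ over the retained autocatalytic indices $A$ equals the original sum minus the contributions of the deleted indices. This is precisely why removing a non-autocatalytic species is harmless while removing an autocatalytic species with $a_i \geq 1$ must be handled with care; tracking this bookkeeping across the dichotomy "some $a_i \geq 2$" versus "all $a_i \leq 1$" is the heart of the argument.
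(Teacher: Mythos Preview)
Your proposal is correct and follows essentially the same route as the paper's proof: establish that the two listed forms are atoms via Lemmas \ref{lemma:atom1} and \ref{lemma:atom2}, then use the arithmetic criterion of Theorem \ref{thm:1rxn} to extract from any multistationary one-reaction fully open network an embedded atom of one of the two forms, so that minimality forces equality in part (1) and Lemma \ref{lemma:embed} handles part (2). Your version is more explicit than the paper's in two places---you spell out the minimality check (the paper says only ``it is clear'') and you treat the reversible case as a separate reduction step rather than subsuming it under ``has a subnetwork''---but the underlying structure is the same.
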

\begin{proof}
Evidently, the one-reaction fully open networks in \eqref{eqn:1rxnatom} admit nondegenerate MSS by Lemmas \ref{lemma:atom1} and \ref{lemma:atom2}. Furthermore, it is clear that both types of fully open networks are minimal in the class of multistationary fully open networks with respect to the embedded network relation. 

On the other hand, assume that $N$ is a one-reaction fully open network with multiple steady states. Then by Theorem \ref{thm:1rxn}, $N$ has a subnetwork containing a reaction of the following form:
\bd
c_1 X_1 + c_2 X_2 + \cdots + c_s X_s \quad \ra \quad  d_1 X_1 + d_2 X_2 + \cdots + d_s X_s
\ed
with $\sum_{i:d_i > c_i} c_i >1$. If there exists a $j$ such that $d_j>c_j>1$, then we let $X =X_j, ~  a_1=c_j$, and $a_2=d_j$. Otherwise, there exists a pair of indices $(i,j)$ such that $c_i=c_j=1$, $d_i>1$ and $d_j >1$. In that case, we let $(X,Y) = (X_i,X_j),~ b_1=d_i$, and $b_2=d_j$. 

Finally, the second part of the theorem follows from Lemma \ref{lemma:embed}. 
This completes the proof.
\end{proof}

The one-reaction atoms of multistationarity are useful beyond the one-reaction setting. If a one-reaction atom of multistationarity is an embedded network of a fully open network $N$ with possibly more than one non-flow reaction, then the `embedding theorem' of Joshi and Shiu (Lemma \ref{lemma:embed}), may be used to infer that $N$ has nondegenerate MSS. We state as a theorem the following important corollary of Theorem \ref{thm:1rxn}, whose scope of application is beyond the one-reaction setting.  

\begin{theorem} \label{thm:largenets}
A fully open network (which may contain more than one non-flow reaction) admits MSS if it contains as an embedded network a reaction of one of the following forms:
\been
\item $a_1 X ~\ra~ a_2 X$ with $a_2>a_1>1$.
\item $X+ Y ~\ra~ b_1 X + b_2 Y$ with $b_1>1$ and $b_2>1$. 
\enen
\end{theorem}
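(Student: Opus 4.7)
The plan is a direct three-step application of results already assembled in the paper. First, I would unpack what it means for the stated reaction to appear as an embedded network. The hypothesis says that the fully open network $G$ contains, as an embedded network, a reaction of the form $a_1 X \to a_2 X$ (with $a_2>a_1>1$) or $X+Y \to b_1X + b_2Y$ (with $b_1,b_2>1$). Because $G$ is fully open, every species in $G$ participates in inflow and outflow reactions; in particular, the species appearing in the embedded reaction retain their flow reactions in the embedded network. Thus the embedded network obtained by restricting $G$ to $\{X\}$ (respectively $\{X,Y\}$) and to the reaction set consisting of the non-flow reaction together with the relevant flow reactions is precisely a one-reaction fully open network of exactly the type analyzed in Lemma \ref{lemma:atom1} (respectively Lemma \ref{lemma:atom2}).

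Second, I would invoke Lemma \ref{lemma:atom1} or Lemma \ref{lemma:atom2} to conclude that this embedded one-reaction fully open network admits nondegenerate MSS: Lemma \ref{lemma:atom1} supplies explicit rate constants yielding two nondegenerate positive steady states for the network $0 \rla X,\ a_1 X \to a_2 X$ whenever $a_2>a_1>1$, and Lemma \ref{lemma:atom2} does the same for $0 \rla X,\ 0 \rla Y,\ X+Y \to b_1 X + b_2 Y$ whenever $b_1,b_2>1$ (both lemmas even identify explicit parameter regimes producing two nondegenerate positive steady states).

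Third, I would apply the embedding result of Joshi and Shiu (Lemma \ref{lemma:embed}): since the embedded network admits nondegenerate MSS, the ambient fully open network $G$ also admits nondegenerate MSS (and hence, in particular, MSS). This completes the argument.

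There is essentially no obstacle here; the main theorem has already done the heavy lifting (Theorem \ref{thm:1rxn} identified the arithmetic condition, Lemmas \ref{lemma:atom1}--\ref{lemma:atom2} supplied explicit multistationary examples, and Lemma \ref{lemma:embed} propagates MSS through embedding). The only subtle point worth being careful about is verifying that the "embedded network" produced by restricting $G$ to the species of the specified reaction is indeed a one-reaction fully open network — i.e., that no unintended non-flow reactions sneak in. This is automatic from the definition of embedded network (Definition \ref{def:embedded_network}), provided we take the reaction subset $R$ to consist of just the one specified non-flow reaction together with the flow reactions on its species, so there is nothing substantive to argue.
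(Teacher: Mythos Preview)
Your proposal is correct and matches the paper's approach essentially exactly. The paper states this theorem as an immediate corollary: the paragraph preceding it explains that the one-reaction atoms (which are precisely the fully open networks in Lemmas \ref{lemma:atom1} and \ref{lemma:atom2}) admit nondegenerate MSS, and then Lemma \ref{lemma:embed} lifts this to any fully open network containing them as an embedded network --- exactly the three-step argument you outline, including your care in choosing $R$ to contain only the one non-flow reaction together with the relevant flow reactions.
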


Recall from \cite{joshi2012atoms} that bimolecular networks are such that each complex in the network has at most two molecules. In other words, a complex in a bimolecular network has one of the following forms: $0$, $A$, $2A$ or $A+B$. Theorem \ref{thm:1rxn} establishes that there are no bimolecular one-reaction fully open networks with multiple steady states. This in turn implies that the smallest (by number of reactions) bimolecular fully open networks with multiple steady states should contain at least two non-flow reactions (where one reaction is not merely the reverse of the other reaction). In fact, there do exist bimolecular two-reaction CFSTR atoms of multistationarity and this set has been fully catalogued in \cite{joshi2012atoms}.

\begin{example}[Example \ref{ex:3nets} continued]
We can now answer the question posed in Example \ref{ex:3nets}. 
\been
\item For network $N1$ the fully open network $G1$ containing the non-flow reaction $2E \ra 3E$ is an embedded network. Since $G1$ is a one-reaction atom of multistationarity, it follows that $N1$ is multistationary. 
\item For the fully open network $N3$, first remove the reaction $A+E \ra 2E$, and then remove the species $C$ and $D$. This gives the fully open network $G3$ containing the non-flow reactions: $A \ra A+B$ and $2B \ra A$. $G3$ is one of the known two-reaction bimolecular atoms of multistationarity \cite{joshi2012atoms} and therefore $N3$ is multistationary.
\item  A straightforward calculation shows that $N2$ possesses no known atoms of multistationarity. In fact, plugging the fully open network $N2$ into the Chemical Reaction Network Toolbox \cite{Toolbox} reveals that $N2$ does not admit multiple positive steady states. 
\enen
\end{example}

We end by posing the question of identifying and characterizing `larger' CFSTR atoms of multistationarity, {\it i.e.} the ones that contain more than one non-flow reaction. Since tests that involve checking whether a certain large atom of multistationarity is embedded in a network may be computationally difficult, characterization of the CFSTR atoms of multistationarity in terms of general principles may be particularly helpful.

\subsection*{Acknowledgments}
This project was initiated by the author at a Mathematical Biosciences Institute (MBI) summer workshop under the guidance of Gheorghe Craciun. A conversation with Martin Feinberg was critical in directing the author in the right direction for completing the proof of the main theorem. The author is grateful to Anne Shiu for many great discussions and insightful comments. This manuscript has benefited greatly from the careful reading and detailed comments provided by the anonymous referees; the author thanks these referees for generously giving their time and attention.  
\bibliographystyle{elsarticle-num}
\bibliography{multistationarity}
\end{document}